\newtheorem{theorem}{Theorem}
\newtheorem{corollary}[theorem]{Corollary}
\newtheorem{lemma}[theorem]{Lemma}
\newtheorem*{remark}{Remark}
\newtheorem{proposition}[theorem]{Proposition}
\newtheorem{definition}{Definition}[section]
\theoremstyle{definition}
\newcommand{\beql}[1]{\begin{equation}\label{#1}}
	\newcommand{\eeq}{\end{equation}}
\newcommand{\comment}[1]{}
\newcommand{\diff}{\textrm{d}}
\newcounter{rem}
\newcounter{step}
\newcounter{mysec}
\begin{document}
	\title[Large-time behavior of operators related to the fractional Laplacian]{Large-time behavior of two families of operators related to the fractional Laplacian on certain Riemannian manifolds}

	\begin{abstract}
		This note is concerned with two families of operators related to the fractional Laplacian, the first arising from the Caffarelli-Silvestre extension problem and the second from the fractional heat equation. They both include the Poisson semigroup. We show that on a complete, connected, and non-compact Riemannian manifold of non-negative Ricci curvature, in both cases, the solution with $L^1$ initial data behaves asymptotically as the mass times the fundamental solution. Similar long-time convergence results remain valid on
		more general manifolds satisfying the Li-Yau two-sided estimate of the heat kernel. The situation changes drastically  on hyperbolic space, and more generally on rank one non-compact symmetric spaces: we show that for the Poisson semigroup, the convergence to the Poisson kernel fails -but remains true under the additional assumption of radial initial data. 
	\end{abstract}
	
	\keywords{fractional Laplacian, extension problem, fractional heat equation, asymptotic behavior, long-time convergence, noncompact symmetric spaces}
	
	\makeatletter
	\@namedef{subjclassname@2020}{\textnormal{2020}
		\it{Mathematics Subject Classification}}
	\makeatother
	\subjclass[2020]{26A33, 35R11, 35B40, 35K05, 58J35, 58J65}

	\author{Effie Papageorgiou}
	\address{	Institut f{\"u}r Mathematik, Universit\"at Paderborn, Warburger Str. 100, D-33098
		Paderborn, Germany}
	\email{papageoeffie@gmail.com}
	
	\maketitle
	
	\tableofcontents
	
	\setlength{\parskip}{0.5em}
	
	\section{Introduction}
	Let $\mathcal{M}$ be a complete, non-compact Riemannian manifold and $\Delta$ be its Laplace-Beltrami operator. It is well understood that the long time behavior of solutions to the heat equation
	\begin{align}\label{S1 heat}
		\begin{cases}
			\partial_{t}u(t,x)&\,
			=\,\Delta u(t,x),
			\qquad\,t>0,\,\,x\in\mathcal{M},\\[5pt]
			u(0,x)&\,=\,f(x),
		\end{cases}
	\end{align} is strongly related to the global geometry of $\mathcal{M}$. This applies also to the heat kernel  $h_{t}\left( x,y\right) $, that is, the minimal
	positive fundamental solution of the heat equation or, equivalently, the
	integral kernel of the heat semigroup $\exp \left( t\Delta \right) $ (see
	for instance \cite{Gri2009}).
	
	The connection between the long time behavior of the
	solution $u (t, x)$ of (\ref{S1 heat}) for initial data $f\in L^1(\mathcal{M})$ with respect to 
	the Riemannian measure $\mu$ on $\mathcal{M}$ and that of the heat kernel $h_t(x, y)$  has recently been the subject of extensive studies, see for example \cite{APZ2023, GPZ2022, Vaz2019} or see \cite{AbAl2022, AGMP2021, APZ2023, P2023} for other Laplacians or settings.  Denote by $M=\int_{\mathcal{M}}f(x)\,\diff{\mu(x)}$ the mass of the initial data. In the case when $\mathcal{M}=\mathbb{R}^{n}$
	with the euclidean metric, the heat kernel is given by 
	\begin{equation*}
		h_{t}(x,y)\,=\,(4\pi {t})^{-\frac{n}{2}}e^{-\frac{|x-y|^{2}}{4t}}
	\end{equation*}%
	and the solution to (\ref{S1 heat}) satisfies as $t\rightarrow \infty $ 
	\begin{equation}
		\Vert u(t,\,.\,)\,-\,M\,h_{t}(\,.\,,x_{0})\Vert _{L^{1}(\mathbb{R}%
			^{n})}\,\longrightarrow \,0  \label{S1 L1 R}
	\end{equation}%
	and 
	\begin{equation}
		t^{\frac{n}{2}}\,\Vert u(t,\,.\,)\,-\,M\,h_{t}(\,.\,,x_{0})\Vert _{L^{\infty }(%
			\mathbb{R}^{n})}\,\longrightarrow \,0.  \label{S1 Linf R}
	\end{equation}%
	By interpolation, a similar convergence holds with respect to any $L^{p}$
	norm when $1<{p}<\infty $: 
	\begin{equation*}
		t^{\frac{n}{2p^{\prime }}}\,\Vert u(t,\,.\,)\,-\,M\,h_{t}(\,.\,,x_0)\Vert
		_{L^{p}(\mathbb{R}^{n})}\,\longrightarrow \,0
	\end{equation*}%
	where $p^{\prime }$ is the H\"{o}lder conjugate of $p$.
	
	Note that (\ref{S1 L1 R})  holds for \emph{any} choice
	of $x_{0}$, which means that in the long run the solution $u\left( t,x\right) $
	and the heat kernel $h_{t}\left( x,x_{0}\right) $ \textquotedblleft
	forget\textquotedblright\ about the initial function $f$, resp. initial
	point $x_{0}.$ We refer to a recent survey \cite{Vaz2017} for more details
	about this property in the euclidean setting. It is worth mentioning that this long-time asymptotic convergence result corresponds
	to the Central Limit Theorem of probability in the PDE setting.
	
	On manifolds of non-negative Ricci curvature, the results \eqref{S1 L1 R} and \eqref{S1 Linf R} were generalized in \cite{GPZ2022}. The situation is drastically different in hyperbolic spaces. It was shown by V\'{a}zquez \cite{Vaz2019} that (\ref{S1 L1 R}) fails for general absolutely integrable initial
	data $f$ but is still true if $%
	f$ is spherically symmetric around $x_{0}.$ Similar results were
	obtained in \cite{APZ2023} in a more general setting of symmetric spaces of
	non-compact type by using tools of harmonic analysis. Note that these spaces
	have nonpositive sectional curvature. Recall that in hyperbolic spaces
	Brownian motion $X_{t}$ tends to escape to $\infty $ along geodesics, which
	means that it \textquotedblleft remembers\textquotedblright\ at least the
	direction of the starting point $x_{0}.$ In \cite{GPZ2022}, it was also shown that (\ref{S1 Linf R}) fails on connected sums $\mathbb{R}^n\#\mathbb{R}^n$, $n\geq 3$.

	The \textit{fractional} Laplacian is the operator $(-\Delta)^{\sigma}$, $\sigma \in(0,1)$, defined as the spectral $\sigma$-th power of the Laplace-Beltrami operator, with $\text{Dom}(-\Delta)\subset\text{Dom}((-\Delta)^{\sigma})$. 
	It is connected to \textit{anomalous} diffusion, which accounts for much of the interest
	in modeling with fractional equations (quasi-geostrophic flows, turbulence and water waves, molecular dynamics,
	and relativistic quantum mechanics of stars). It also has various applications in probability and finance. On certain ``good'' non-compact Riemannian manifolds $\mathcal{M}$ (e.g. Cartan-Hadamard manifolds or manifolds with non-negative Ricci curvature,  see \cite[Proposition 3.3]{BanEtAl}) one can obtain the fractional Laplacian through a Dirichlet-to-Neumann map extension problem
	introduced by Caffarelli and Silvestre \cite{CaSi2007}, as well as a Poisson formula and a fundamental solution, see the work of Stinga and Torrea  \cite{ST2010}. 
	More precisely, let $H^{\sigma}(\mathcal{M})$ denote the usual Sobolev space on $\mathcal{M}$. Then for any given $f\in H^{\sigma}(\mathcal{M})$ there exists a unique solution of the extension problem
	\begin{equation}\label{CS}
		\Delta v+\frac{(1-2\sigma)}{t}\frac{\partial v}{\partial t}+\frac{\partial^2 v}{\partial t^2}=0, \quad 0 <\sigma < 1, \quad t>0, \; x\in \mathcal{M},
	\end{equation}
	with $v(0,x)=f(x)$ and the fractional Laplacian can be recovered through
	\begin{equation*}
		(-\Delta)^{\sigma}f(x)=-2^{2\sigma-1}\frac{\Gamma(\sigma)}{\Gamma(1-\sigma)}\lim_{t\rightarrow 0^{+}}t^{1-2\sigma}\,\frac{\partial v}{\partial t}(x,t).
	\end{equation*}
	Notice that equation \eqref{CS} gives rise to the first family of operators this note is concerned about. The second family of operators we consider arises from the fractional heat equation
	\begin{equation}\label{FH}
		\partial_t u + (-\Delta )^{\alpha/2}u = 0, \quad 0 <\alpha < 2, \quad t>0, \; x\in \mathcal{M}.
	\end{equation} 
	These two families of operators have drawn much attention, see for instance \cite{AGMP2021, BanEtAl, BP2022, BG1960, BJ07, CK03, CK08, Shi17, Vaz2018} and the references therein. It is worth mentioning that both families of operators include the Poisson semigroup (for  $\sigma=1/2$ and $\alpha=1$  respectively).

	The aim of this paper is to study the long time behavior of these two families of operators on certain Riemannian manifolds, for absolutely integrable initial data. More precisely, we treat the case of  non-negative Ricci curvature and generalizations of these, that is, doubling volume, complete, non-compact manifolds with double-sided heat kernel estimates of the Li-Yau type, and show that the results
	are essentially euclidean, in the sense of convergence proved in \cite{Vaz2018}. This is no longer the case in negatively curved
	manifolds. More precisely, we consider the Poisson semigroup on real hyperbolic
	space, and more generally on rank one symmetric spaces of non-compact
	type and show that in this case, the long time results are vastly different:
	the aforementioned euclidean-type results fail even for  compactly supported
	initial data.  
	
	Notice that for all manifolds considered here, these two families of operators admit integral kernels which are actually probability measures.
	
	Our main result is the following.

	\begin{theorem}\label{MainThm} 
		Let $\mathcal{M}$ be a complete, connected and
		non-compact Riemannian manifold of non-negative Ricci curvature. Let $\psi_t^{\gamma}$, $\gamma\in\{1,\alpha\}$, be either the fundamental solution to (\ref{CS}) for $\gamma=1$ for all $\sigma\in (0,1)$ or the fundamental solution to (\ref{FH}) for $\gamma=\alpha$,  $\alpha\in(0,2)$, and consider the corresponding integral operators
		$$\mathcal{K}_t^{\gamma}(f)(x)=\int_{\mathcal{M}}\psi_t^{\gamma}(x,y)\,f(y)\, \diff \mu(y), \quad f\in L^1(\mathcal{M}).$$
		Set $M=\int_{\mathcal{M}}f\, d\mu$ and fix a base
		point $x_{0}\in \mathcal{M}$.  Then,  as $%
		t\rightarrow +\infty $,
		\begin{equation}
			\Vert \mathcal{K}_t^{\gamma}(f)-\,M\,\psi_t^{\gamma}(\,.\,,x_{0})\Vert _{L^{1}(\mathcal{M}%
				)}\,\longrightarrow \,0  \label{CSL1}
		\end{equation}%
		and
		\begin{equation}
			\,\Vert \left\vert \mathcal{K}_t^{\gamma}(f)-\,M\,\psi_t^{\gamma}(\,.\,,x_{0})\right\vert V(\,.\,,
			t^{1/\gamma})\Vert _{L^{\infty }(\mathcal{M})}\,\longrightarrow 0.  
			\label{CSsup}
		\end{equation}
	\end{theorem}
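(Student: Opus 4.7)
My plan is to reduce both families to the pure heat semigroup, for which the corresponding asymptotic result is already established in \cite{GPZ2022}, via a Bochner-type subordination. For the fractional heat flow, the $\alpha/2$-stable subordinator $\eta_t^{\alpha/2}$ yields, after the self-similar rescaling $s=t^{2/\alpha}r$,
\[
\psi_t^{\alpha}(x,y)\,=\,\int_0^\infty \eta_1^{\alpha/2}(r)\, h_{t^{2/\alpha}r}(x,y)\,dr.
\]
For the extension problem (\ref{CS}), the Stinga-Torrea Poisson representation $v(t,x)=\frac{t^{2\sigma}}{4^\sigma\Gamma(\sigma)}\int_0^\infty e^{-t^2/(4s)}\, e^{s\Delta}f(x)\, s^{-1-\sigma}\,ds$, after $s=t^2 r$, becomes
\[
\psi_t^{1}(x,y)\,=\,\int_0^\infty \nu_1(r)\, h_{t^2 r}(x,y)\,dr, \qquad \nu_1(r)\,=\,\frac{1}{4^\sigma\Gamma(\sigma)}\, e^{-1/(4r)}\, r^{-1-\sigma}.
\]
In both cases $\nu_\gamma(r)\,dr$ is a probability measure on $(0,\infty)$. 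Applying Fubini to $\mathcal{K}_t^\gamma(f)(x)$ and subtracting $M\,\psi_t^\gamma(x,x_0)$ gives the key identity
\[
\mathcal{K}_t^\gamma(f)(x)\,-\,M\,\psi_t^\gamma(x,x_0)\,=\,\int_0^\infty \nu_\gamma(r)\,\bigl[e^{t^{2/\gamma}r\Delta}f(x)-M\,h_{t^{2/\gamma}r}(x,x_0)\bigr]\,dr.
\]

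\textbf{$L^1$ convergence.} For (\ref{CSL1}) I take the $L^1(\mathcal{M})$ norm and apply Minkowski to bring it inside the subordination integral. The inner quantity $\Vert e^{s\Delta}f-M\,h_s(\cdot,x_0)\Vert_{L^1(\mathcal{M})}$ is uniformly dominated by $2\Vert f\Vert_{L^1(\mathcal{M})}$ in $r$ and $t$ (stochastic completeness is automatic under non-negative Ricci curvature), while for each fixed $r>0$ it tends to $0$ as $s=t^{2/\gamma}r\to\infty$ by the main result of \cite{GPZ2022}. Dominated convergence then yields (\ref{CSL1}).

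\textbf{Weighted $L^\infty$ convergence and main obstacle.} The same strategy applies to (\ref{CSsup}), with the extra subtlety that the natural weight for $h_{t^{2/\gamma}r}(\cdot,x_0)$ is $V(\cdot,t^{1/\gamma}\sqrt{r})$ rather than the target $V(\cdot,t^{1/\gamma})$. Quantitative volume doubling produces, for some doubling exponent $D$, the comparison $V(x,t^{1/\gamma})\leq C(1+r^{-D/2})\,V(x,t^{1/\gamma}\sqrt{r})$, which transfers the weighted $L^\infty$ asymptotic of \cite{GPZ2022} to the desired weight at the price of an $r$-dependent factor. The Li-Yau upper bound $h_s(x,y)\leq C/V(x,\sqrt{s})$ supplies the uniform control $\Vert V(\cdot,\sqrt{s})(e^{s\Delta}f-M\,h_s(\cdot,x_0))\Vert_{L^\infty(\mathcal{M})}\leq C(\Vert f\Vert_{L^1(\mathcal{M})}+|M|)$. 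The subordinator densities $\nu_\gamma$ enjoy super-exponential decay at $r=0$ (from the explicit factor $e^{-1/(4r)}$ in the Caffarelli-Silvestre case and from the classical small-argument asymptotic of stable densities in the fractional-heat case) and polynomial decay at infinity, so the majorant $\nu_\gamma(r)(1+r^{-D/2})$ is integrable, and dominated convergence closes (\ref{CSsup}). The main obstacle lies precisely in this volume-weight transfer: one must pair quantitative volume doubling with the two-sided tail behavior of $\nu_\gamma$ to exhibit an $r$-integrable majorant. Once this is in place the proof reduces to importing the heat-kernel asymptotics of \cite{GPZ2022} inside the subordination integral.
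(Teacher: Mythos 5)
Your argument is correct, but it follows a genuinely different route from the paper. You rescale the subordination formulas so that $\psi_t^{\gamma}(x,y)=\int_0^\infty \nu_\gamma(r)\,h_{t^{2/\gamma}r}(x,y)\,dr$ with $\nu_\gamma$ a probability density, and then transfer the known heat-semigroup asymptotics of \cite{GPZ2022} through the subordination integral by Minkowski plus dominated convergence; the only real work is the weight transfer $V(x,t^{1/\gamma})\lesssim (1+r^{-\nu/2})V(x,t^{1/\gamma}\sqrt r)$ from doubling, together with the integrability of $\nu_\gamma(r)(1+r^{-\nu/2})$, which indeed holds thanks to the factor $e^{-1/(4r)}$ in the Caffarelli--Silvestre case and the stretched-exponential decay of the stable density near $r=0$ (cf.\ \eqref{etaUL}) in the fractional-heat case. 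The paper instead works at the level of the kernels themselves: it isolates properties (P1)--(P4) of an abstract class $\mathcal{P}_\gamma$ (two-sided bounds, quasi-constancy on balls of radius $t^{1/\gamma}$, and a H\"older-type estimate in $t^{-\theta_\gamma}$, proved via Lemmas \ref{aux lemma Q} and \ref{aux lemma P}, the Li--Yau bounds and \eqref{Holder}), proves the convergence with an explicit rate $\textrm{O}(t^{-\theta_\gamma})$ for compactly supported data (Proposition \ref{S3 proposition}), and concludes by density. Your approach is shorter and handles all $f\in L^1$ at once with no density step, but it uses the $L^1$ and weighted $L^\infty$ results of \cite{GPZ2022} as a black box and yields no rate; the paper's approach is self-contained modulo \eqref{LY} and \eqref{Holder}, gives the rate, and produces the two-sided kernel estimates \eqref{kernelQ UL}, \eqref{kernelP UL} that are reused later (optimality discussion, the arbitrarily slow convergence construction), as well as a statement valid for the whole class $\mathcal{P}_\gamma$ and hence for Corollary \ref{CorGen}. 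One small point to tidy: the Li--Yau upper bound is stated with $V(y,\sqrt s)$, so your uniform bound $\Vert V(\cdot,\sqrt s)\,|e^{s\Delta}f-M\,h_s(\cdot,x_0)|\,\Vert_{L^\infty}\lesssim \Vert f\Vert_{L^1}$ needs the change-of-center volume comparison, with the polynomial factor absorbed by the Gaussian; this is routine but should be said.
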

	
	\begin{remark}
		By interpolation between (\ref{CSL1}) and (\ref{CSsup}%
		), we obtain for any $p\in \left( 1,\infty \right) $ 
		\begin{equation*}
			\,\Vert \left\vert \mathcal{K}_t^{\gamma}(f)-\,M\,\psi_t^{\gamma}(\,.\,,x_{0})\right\vert V(\,.\,,%
			t^{1/\gamma})^{1/p^{\prime }}\Vert _{L^{p}(\mathcal{M})}\,\longrightarrow \,0%
			\mathnormal{.}  
		\end{equation*}
	\end{remark}
	
	The situation changes drastically in real hyperbolic space, and generally, in rank one non-compact symmetric spaces. More precisely, the Poisson semigroup fails to satisfy these convergences.

	\begin{theorem}\label{Main Poisson}
		Let $\mathbb{X}$ be a rank one non-compact symmetric space.	Assume that $f\in L^1(\mathbb{X})$ and let $M=\int_{\mathbb{X}}f$ denote its mass. If $e^{-t\sqrt{-\Delta}}$ is the Poisson semigroup and $p_t$ the Poisson kernel, then, in general 
		$$
		\|  e^{-t\sqrt{-\Delta}}f - M \, p_t \|_{L^1(\mathbb{X})}\centernot\longrightarrow 0
		\quad\text{as}\quad
		t \to +\infty .
		$$
		However, the convergence holds if $f$ is  radial. 
	\end{theorem}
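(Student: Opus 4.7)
For the \emph{radial case} I would rely on Bochner's subordination formula: with $\phi_t(s) = \frac{t}{2\sqrt{\pi}\,s^{3/2}}\,e^{-t^2/(4s)}$, a probability density on $(0,\infty)$, one has $p_t(x,y) = \int_0^{\infty}\phi_t(s)\, h_s(x,y)\,ds$ and more generally $e^{-t\sqrt{-\Delta}}f = \int_0^{\infty}\phi_t(s)\, e^{s\Delta}f\,ds$. Subtracting $M\,p_t(\cdot,x_0)$ and applying Minkowski's integral inequality yields
\begin{equation*}
\bigl\|e^{-t\sqrt{-\Delta}}f - M\,p_t(\cdot,x_0)\bigr\|_{L^1(\mathbb{X})}\;\le\;\int_0^{\infty}\phi_t(s)\,\bigl\|e^{s\Delta}f-M\,h_s(\cdot,x_0)\bigr\|_{L^1(\mathbb{X})}\,ds.
\end{equation*}
For radial $f$, the heat-equation analog of this convergence has already been established---by V\'azquez \cite{Vaz2019} on real hyperbolic space and in \cite{APZ2023} in the general rank one setting---so the integrand tends to $0$ as $s\to\infty$ while remaining uniformly bounded by $2\|f\|_{L^1}$. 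The change of variables $u=t^2/(4s)$ gives $\int_0^{S_0}\phi_t(s)\,ds = \int_{t^2/(4S_0)}^{\infty}\frac{e^{-u}}{\sqrt{\pi u}}\,du\to 0$ as $t\to\infty$ for any fixed $S_0$, so splitting the integral at a large $S_0$ (chosen so that the heat-equation defect is $<\varepsilon$ on $[S_0,\infty)$) finishes this half.

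For the failure half, a clean abstract reduction is available. If the claimed convergence held for \emph{every} $f\in L^1(\mathbb{X})$, then applying it separately to $f$ and to its translate $L_g f(x) := f(g^{-1}x)$ by any isometry $g\in G$, and using the $G$-equivariance $e^{-t\sqrt{-\Delta}}\circ L_g = L_g\circ e^{-t\sqrt{-\Delta}}$, one would deduce, via the triangle inequality,
\begin{equation*}
\|p_t(\cdot,x_0)-p_t(\cdot,y)\|_{L^1(\mathbb{X})}\longrightarrow 0 \quad\text{as}\quad t\to\infty, \qquad y = g^{-1}x_0.
\end{equation*}
It therefore suffices to exhibit a single $y\neq x_0$ for which this fails. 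My plan is to use sharp large-time pointwise asymptotics of the Poisson kernel on rank one symmetric spaces---obtainable by subordination from the Davies--Mandouvalos-type asymptotics of the heat kernel---showing that $p_t(\cdot,y)$ retains, via its Busemann/boundary structure, specific information about its center $y$ that survives the global $e^{-t|\rho|}$ damping, so that two Poisson profiles centered at different points cannot asymptotically coincide in $L^1$.

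The principal difficulty is the non-convergence direction. Since $\|p_t(\cdot,x_0)\|_{L^1} = \|p_t(\cdot,y)\|_{L^1} = 1$, the triangle inequality gives only the trivial upper bound $2$, and naive duality tests against bounded joint eigenfunctions of $\sqrt{-\Delta}$---such as $\varphi_0$, or the boundary Poisson kernels $e^{-|\rho|\langle\cdot,b\rangle}$ with $b\in K/M$---produce only estimates of order $e^{-t|\rho|}$, which are uninformative. A genuine positive $\liminf$ thus requires a delicate pointwise comparison of two translated Poisson profiles, carried out through the inverse spherical transform with careful control of the Harish-Chandra $c$-function near $\lambda = 0$.
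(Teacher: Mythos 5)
Your radial half is correct and takes a genuinely different route from the paper: you subordinate the Poisson semigroup to the heat semigroup via $p_t=\int_0^\infty\phi_t(s)\,h_s\,ds$, apply Minkowski's inequality, and import the known radial heat-equation asymptotics on (rank one) symmetric spaces from \cite{Vaz2019,APZ2023}; the splitting at $S_0$ together with $\int_0^{S_0}\phi_t(s)\,ds=\int_{t^2/(4S_0)}^{\infty}e^{-u}(\pi u)^{-1/2}\,du\to 0$ is a clean and valid argument, arguably simpler than the paper's. The paper instead proves the radial convergence directly from sharp large-time asymptotics of $p_t$: it costs more work, but it localizes the problem on a critical region and gives quantitative control there, and crucially the very same computation simultaneously produces the counterexample for non-radial data, which your subordination argument cannot do (it only transfers positive results).

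The non-convergence half, however, has a genuine gap. Your equivariance reduction is sound: if convergence held for every $f\in L^1(\mathbb{X})$, it would force $\|p_t(\cdot,y)-p_t(\cdot,o)\|_{L^1(\mathbb{X})}\to 0$ for every $y$, and this is exactly the quantity the paper shows stays bounded away from zero (its counterexample, a point mass at $y=y_0K$ with $y_0\notin K$, is the same reduction). But establishing a positive $\liminf$ is the entire analytic content of the theorem, and your proposal leaves it as a plan; as you note yourself, the triangle inequality and tests against bounded eigenfunctions such as $\varphi_0$ only produce terms of size $e^{-\rho t}$ and are uninformative. What is needed, and what the paper supplies, is: (i) the identification of the critical region $\Omega_t=\{t^{2-\epsilon}\le d(x,o)\le t^{2+\epsilon}\}$ carrying asymptotically all the mass of $p_t$ (Proposition \ref{Poisson critical}) --- note the spatial scale $t^{2}$, not $t$, coming from balancing the factor $e^{-\rho r-\rho\sqrt{t^2+r^2}}$ in \eqref{bounds} against the volume growth $e^{2\rho r}$; and (ii) the uniform limit $p_t(x,y)/p_t(x,o)=e^{2\rho\,\tau(k^{-1}y_0)}+o(1)$ for $x\in\Omega_t$ (Lemmas \ref{Busemann} and \ref{quotient}, based on the Anker--Ji asymptotics \eqref{asymp} of \cite{AJ99}), which yields $\liminf_{t\to\infty}\|p_t(\cdot,y)-p_t(\cdot,o)\|_{L^1}\ge\int_K\bigl|e^{2\rho\tau(k^{-1}y_0)}-1\bigr|\,dk>0$ when $y_0\notin K$. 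Your proposed substitute --- an inverse spherical transform analysis with control of the $c$-function near $\lambda=0$ --- is not obviously adapted: $p_t(\cdot,y)$ is not radial about $o$, Plancherel-type information does not give $L^1$ lower bounds, and the phenomenon is located at distance $\asymp t^2$, where the kernel is governed by the Busemann factor $e^{2\rho\tau(k^{-1}y_0)}$; it may be possible to push such an argument through, but it is precisely this sharp localized asymptotic comparison that is missing, so the failure of convergence is asserted rather than proved.
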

	
	To the best of our knowledge, even though the fractional Laplacian and related diffusion equations have been the center of many studies, the above asymptotics have been established only on euclidean space, \cite{Vaz2018}. We use different methods to prove our results. For the case of manifolds with non-negative Ricci curvature, we use subordination formulas to use information for the heat kernel, such as double-sided bounds and a quantitative H{\"o}lder continuity estimate, which is a key component to our proof. Notice that one could have pursued gradient estimates, but the approach used here is more general. For the case of rank one non-compact symmetric spaces, we rely on tools of harmonic analysis available in this setting and large-time asymptotics of the Poisson kernel. An essential idea of the proof in the rank one case is to describe the critical region of the Poisson kernel, since this kernel is a probability measure.

	\section{Preliminaries} \label{Prelim}
	From now on, $\mathcal{M}$ denotes a complete,
	connected, non-compact Riemannian manifold of dimension $n\geq 2$. Let $\mu $
	be the Riemannian measure on $\mathcal{M}$. Let $d(x,y)$ be the geodesic
	distance between two points $x,y\in \mathcal{M}$, and $V(x,r)=\mu \left(
	B\left( x,r\right) \right) $ be the Riemannian volume of the geodesic ball $%
	B(x,r)$ of radius $r$ centered at $x\in \mathcal{M}$.
	
	Throughout the paper we follow the convention that $C,C_{1},c,c_{1}...$ denote
	positive constants.
	These constants may depend on $\mathcal{M}$ but do not depend on the
	variables $x,y,t$. Moreover, the notation $A\lesssim {B}$ between two
	positive expressions means that $A\leq {C}B$, and $A\asymp {B}$ means $%
	cB\leq {A}\leq {C}B$. Also, $A(t)\sim B(t)$ means that $A(t)/B(t)\rightarrow 1$ as $t\rightarrow +\infty$.

	We say that $\mathcal{M}$ satisfies the volume doubling property if, for all 
	$x\in \mathcal{M}$ and $r>0$, we have 
	\begin{equation}
		V(x,2r)\,\leq \,C\,V(x,r).  \label{VD}
	\end{equation}%
	It follows from (\ref{VD}) that there exist some positive constants $\nu
	,\nu ^{\prime }>0$ such that 
	\begin{equation}
		c\,\left( \frac{R}{r}\right) ^{\nu ^{\prime }}\,\leq \,\frac{V(x,R)}{V(x,r)}%
		\,\leq \,C\,\left( \frac{R}{r}\right) ^{\nu }  \label{S2 Comp same center}
	\end{equation}%
	for all $x\in \mathcal{M}$ and $0<r\leq {R}$ (see for instance \cite[Section
	15.6]{Gri2009}). Moreover, (\ref{S2 Comp same center}) implies that, for all 
	$x,y\in \mathcal{M}$ and $r>0,$ 
	\begin{equation*}
		\frac{V\left( x,r\right) }{V\left( y,r\right) }\leq C\left( 1+\frac{d\left(
			x,y\right) }{r}\right) ^{\nu }.  \label{S2 Comp diff center}
	\end{equation*}
	Notice that hyperbolic spaces as well as all non-compact symmetric spaces fail to be doubling (they are locally doubling, though). More precisely, in the rank one case, if $n=\dim \mathbb{X}$ and $\rho^2>0$ is the bottom of the spectrum, it holds 
	\begin{align}\label{volX}
		V(x,r)\asymp\begin{cases}
			r^n,  &\text{if} \;0<r<1\, ,\\[5pt]
			e^{2\rho r}, &\text{if} \;r\geq 1.
		\end{cases}
	\end{align}
	
	The integral kernel $h_{t}\left( x,y\right) $ of the heat semigroup $\exp
	(t\Delta )$ is the smallest positive fundamental solution to the heat
	equation (\ref{S1 heat}). It is known that $h_{t}\left( x,y\right) $ is
	smooth in $\left( t,x,y\right) $, symmetric in $x,y$, and
	satisfies the semigroup identity (see for instance  \cite{Gri2009}, \cite{Str1983}). Besides, for all $y\in\mathcal{M}$ and $t>0$ 
	\begin{equation*}
		\int_{\mathcal{M}}h_{t}\left( x,y\right) \mathrm{d}\mu (x)\leq 1.
	\end{equation*}%
	The manifold $\mathcal{M}$ is called stochastically complete if for all $%
	y\in\mathcal{M}$ and $t>0$ 
	\begin{equation*}
		\int_{\mathcal{M}}h_{t}(x,y)\,\mathrm{d}\mu (x)=\,1.
	\end{equation*}%
	It is known that if $\mathcal{M}$ is geodesically complete and, for some $%
	x_{0}\in \mathcal{M}$ and all large enough $r,$%
	\begin{equation*}
		V\left( x_{0},r\right) \leq e^{Cr^{2}},
	\end{equation*}%
	then $\mathcal{M}$ is stochastically complete. In particular, the volume
	doubling property \eqref{S2 Comp same center} and the  volume bounds \eqref{volX} for rank one symmetric spaces imply that all manifolds considered in this paper are stochastically complete.

	When the Ricci curvature of $\mathcal{M}$ is non-negative, 
	the following two-sided estimates of the heat kernel were proved by Li and
	Yau \cite{LiYa1986}:
	\begin{equation}
		\frac{c_{1}}{V(y,\sqrt{t})}\,\exp \Big(-C_{1}\,\frac{d^{2}(x,y)}{t}\Big)%
		\,\leq \,h_{t}(x,y)\,\leq \,\frac{C_{2}}{V(y,\sqrt{t})}\,\exp \Big(-c_{2}\,%
		\frac{d^{2}(x,y)}{t}\Big). \label{LY}
	\end{equation}%
	Apart from manifolds with non-negative Ricci curvature, the
	above-described manifolds cover many other examples.
	Let us recall that, on a complete Riemannian manifold,
	the following three properties are equivalent:
	
	\begin{itemize}
		\item The two-sided estimate \eqref{LY} of the heat kernel;
		
		\item
		The uniform parabolic Harnack inequality:
		\begin{align}\label{PHI}
			\sup_{(\frac{T}{4},\frac{T}{2})\times{B(x,\frac{r}{2})}}u(t,x)\;
			\le\,C\,
			\sup_{(\frac{3T}{4},T)\times{B(x,\frac{r}{2})}}u(t,x),
		\end{align}
		where $u(t,x)$ is a non-negative solution of the heat equation 
		$\partial _{t}u=\Delta u$ in a cylinder $(0,T)\times{B(x,r)}$ 
		with $x\in\mathcal{M}$, $r>0$ and $T=r^{2}$.
		
		\item 
		The conjunction of the volume doubling property \eqref{VD} and the
		Poincaré inequality:
		\begin{align}	\label{PI}
			\int_{B(x,r)}|f-f_{B}|^{2}\,\diff{\mu}\,
			\le\,C\,r^{2}\,
			\int_{B(x,r)}|\nabla{f}|^{2}\,\diff{\mu},
		\end{align}
		for all $x\in\mathcal{M}$, $t>0$, and bounded Lipschitz functions $f$ in
		$B(x,r)$. Here, $f_{B}$ is the mean of $f$ over $B(x,r)$.
	\end{itemize}
	See, for instance, \cite {FaSt1986,Gri1991,Sal1992,Sal2002} for more details.
	Manifolds satisfying these equivalent conditions include complete manifolds 
	with non-negative Ricci curvature, connected Lie groups with polynomial 
	volume growth, co-compact covering manifolds whose deck transformation 
	has polynomial growth, and many others. 
	We refer to \cite[pp.417--418]{Sal2010} for a list of examples.

	\begin{corollary}\label{CorGen}
		Let $\mathcal{M}$ be a geodesically complete non-compact
		manifold that satisfies one of the following equivalent conditions:
		\begin{itemize}
			\item the two-sided estimate \eqref{LY} of the heat kernel;
			
			\item the uniform parabolic Harnack inequality \eqref{PHI};
			
			\item the conjunction of the volume doubling property \eqref{VD} and the
			Poincar{\'e} inequality \eqref{PI}.
		\end{itemize}
		
		Then the conclusions of Theorem \ref{MainThm} are true.
	\end{corollary}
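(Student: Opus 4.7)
The plan is to observe that the argument establishing Theorem~\ref{MainThm} uses the hypothesis of non-negative Ricci curvature only through a short list of analytic consequences, each of which is already built into (or directly derivable from) the three equivalent hypotheses of this corollary. Thus the proof reduces to bookkeeping: identify the ingredients, verify them under the weaker hypotheses, and quote the proof of Theorem~\ref{MainThm}.

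First, I would trace through the strategy outlined by the author after Theorem~\ref{MainThm} and isolate exactly what is being used about $\mathcal{M}$. The essential inputs are: (i) the volume doubling property \eqref{VD}, together with its polynomial consequences \eqref{S2 Comp same center}; (ii) the two-sided Li--Yau heat kernel estimate \eqref{LY}; (iii) stochastic completeness of $\mathcal{M}$, so that the total mass of $h_t$ and hence of the subordinated kernels $\psi_t^{\gamma}$ is preserved; (iv) the subordination formulas expressing $\psi_t^{\gamma}$ as integrals of $h_s(x,y)$ against one-sided stable densities in $s$, which depend only on the spectral calculus for $-\Delta$; and (v) a quantitative parabolic Hölder continuity estimate for $h_s(\,\cdot\,,y)$, used to control $h_s(x,y)-h_s(x_0,y)$ uniformly on the relevant space-time scale. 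The Ricci assumption is never invoked directly; it only furnishes these ingredients via Bishop--Gromov and the original Li--Yau theorem.

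Next, I would verify that all of (i)--(v) remain valid under any of the three listed equivalent conditions. Item (ii) is one of the hypotheses; item (i) is either a hypothesis (in the doubling $+$ Poincaré formulation) or follows by evaluating \eqref{LY} on the diagonal at comparable scales $\sqrt{t}$ and $2\sqrt{t}$. Item (iii) follows from (i): doubling gives a polynomial volume bound, which trivially satisfies the Grigoryan criterion $V(x_0,r)\le e^{Cr^2}$, so $\mathcal{M}$ is stochastically complete. Item (iv) is a purely spectral / semigroup-theoretic fact and uses nothing about the geometry beyond stochastic completeness. Item (v), the parabolic Hölder continuity of non-negative solutions of the heat equation with scale-invariant constants, is the classical Moser/De~Giorgi--Nash type estimate, known in this geometric setting to be equivalent to the parabolic Harnack inequality \eqref{PHI}; see Saloff-Coste's work cited in the preliminaries.

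With these ingredients established in the broader setting, the proof of Theorem~\ref{MainThm} carries over verbatim to yield both \eqref{CSL1} and \eqref{CSsup}. The point I would most carefully verify is (v): in the Ricci $\geq 0$ case one might be tempted to substitute Li--Yau gradient bounds for the quantitative Hölder continuity, but the cleanest route --- and the one that generalizes --- is to use the Harnack-based Hölder estimate throughout the proof of Theorem~\ref{MainThm}, so that no additional adjustment is needed here. This is the main (and essentially the only) substantive check; once it is in place, the corollary is immediate.
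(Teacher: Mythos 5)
Your proposal matches the paper's own treatment: the paper gives no separate argument for this corollary because its proof of Theorem \ref{MainThm} (Section 4, via the class $\mathcal{P}_\gamma$, the subordination formulas, volume doubling, the two-sided bound \eqref{LY}, stochastic completeness, and crucially the H\"older estimate \eqref{Holder}, which is quoted as a consequence of \eqref{LY} rather than of Ricci curvature) is written precisely at the level of generality of the three equivalent conditions. Your checklist (i)--(v), including the deliberate use of the Harnack-based H\"older continuity instead of Li--Yau gradient bounds, is exactly the point the author makes, so the corollary follows as you describe.
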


	Let us now recall a consequence of the two-sided estimate \eqref{LY}, which will be essential for this note, see for instance  \cite[Theorem 5.4.12]{Sal2002}:
	there exists $0<\theta\leq1$ such that for all $t>0$,
	$x,y,z\in\mathcal{M}$, and $d(y,z)\le\sqrt{t}$,
	\begin{align}
		|h_{t}(x,y)\,-\,h_{t}(x,z)|\,
		\le\,
		\Big(\frac{d(y,z)}{\sqrt{t}}\Big)^{\theta}\,
		\frac{C}{V(x,\sqrt{t})}\,
		\exp\Big(-c\frac{d^2(x,y)}{t}\Big).
		\label{Holder}
	\end{align}
	Notice that a pointwise gradient estimate of the form
	\begin{equation*}
		|\nabla _{y}\,h_{t}(x,y)|\,\leq \,\frac{C}{\sqrt{t}\,V(x,\sqrt{t})}\,\exp %
		\Big(-c\,\frac{d^{2}(x,y)}{t}\Big)
	\end{equation*} 
	is the limit case $\theta=1$ of \eqref{Holder}, but requires more structure on the manifold, such as non-negative Ricci curvature, see for instance \cite{LiYa1986}. In this sense, the H{\"o}lder continuity estimate \eqref{Holder} is more general.

	Last, recall that on spaces of essentially negative curvature the
	above estimates of the heat kernel typically fail: for example, these are
	hyperbolic spaces \cite{DaMa1988}, non-compact symmetric spaces \cite{AJ99, AO}, 
	asymptotically hyperbolic manifolds \cite{ChHa2020}, and fractal-like manifolds 
	\cite{Bar1998}. 
	
	We finally recall the definition of real hyperbolic space and more generally, of rank one symmetric spaces, as well as some indispensable tools from Fourier analysis on these spaces.

	\subsection{Rank one non-compact symmetric spaces} \label{subsection SS}
	Our main reference for this subsection is \cite{Hel}.
	Let $\mathbb{G}$ be a connected,  non-compact semisimple Lie group with finite center. Let $K$ be a maximal compact subgroup of $\mathbb{G}$ and $\mathbb{X}=\mathbb{G}/K$ be the corresponding symmetric space. We consider a Cartan decomposition $\mathfrak{g}=\mathfrak{k}\oplus\mathfrak{p}$ of the Lie algebra of $\mathbb{G}$. Fix a maximal abelian subspace $\mathfrak{a}$ of $\mathfrak{p}$ and consider the decomposition $\mathfrak{g}=\mathfrak{n}\oplus\mathfrak{a}\oplus\mathfrak{k}$. If $\mathfrak{a}\cong \mathbb{R}$, then we say that the symmetric space $\mathbb{X}$ has rank one. 
	
	From now on, we assume that $\mathrm{rank}\mathbb{X}=1$. In this case, after fixing some order on the non-zero restricted roots, there are at most two roots which
	are positive with respect to this order, which we denote by $\alpha$ and $2\alpha$. Let $m_{\alpha}$ and $m_{2\alpha}$ be the
	multiplicities of these roots, and define the number $\rho$ by  $\rho:=(m_{\alpha}+2m_{2\alpha})/2$. A rank one non-compact symmetric space
	is one of the following: the real, the complex, the quaternionic hyperbolic space and the octonionic hyperbolic plane.  We have $n=\dim \mathbb{X}=m_{\alpha}+m_{2\alpha}+1$ and $\rho$ equal to $(n-1)/2$, $n/2$, $n/2+1$ and $11$, respectively.
	
	The group $\mathbb{G}$ admits the following decompositions,
	\begin{align*}
		\begin{cases}
			\,\mathbb{G}\,=\,N\,\mathbb{A}\,K 
			\qquad&\textnormal{(Iwasawa)}, \\[5pt]
			\,\mathbb{G}\,=\,K\,\overline{\mathbb{A}^{+}}\,K
			\qquad&\textnormal{(Cartan)}.
		\end{cases}
	\end{align*}
	Let $H_0$ be the unique element of $\mathfrak{a}$ with the property that $\langle \alpha, H_0 \rangle=1$ and normalize
	the Killing form on $\mathfrak{g}$ such that $|H_0| = 1$. Denote by $\tau(g)$ the real number such that
	$$g=n\exp (\tau(g) \, H_0)k.$$ To simplify the notation,
	we often identify the Lie subgroup $\mathbb{A}=\exp \mathfrak{a}$ with the real line $\mathbb{R}$ using the map $\tau \mapsto \exp(\tau\,H_0)$. Notice that we may also identify  $\overline{\mathbb{A}^{+}}$ with $[0, +\infty)$. In the Cartan decomposition, the Haar measure 
	on $\mathbb{G}$ writes
	\begin{align}\label{vol}
		\int_{\mathbb{G}}f(g)\,\diff{g}
		=\,
		\textrm{const.}\,\int_{K}\diff{k_1}\,
		\int_{0}^{\infty}\delta(r)\, \diff{r}
		\int_{K}f(k_{1}(\exp r\, H_0)k_{2})\,\diff{k_2},
	\end{align}
	with density
	\begin{equation}\label{dens}
		\delta(r) =(\sinh r)^{m_{\alpha}}(\sinh 2r)^{m_{2\alpha}} \lesssim e^{2\rho \,r}.
	\end{equation}
	Here $K$ is equipped with its normalized Haar measure and ``const'' is a positive normalizing constant, so that for right-$K$ invariant functions, we have
	$$\int_{\mathbb{X}}f(x)\,\diff\mu(x)=\int_{\mathbb{G}}f(g)\,\diff g.$$
	Notice that viewed on $\mathbb{G}/K$, $r$ is the distance of $gK$ to the origin $o=\{K\}$. 
	
	Finally, we describe Fourier analysis on rank one non-compact symmetric spaces.  For continuous compactly supported functions, the Helgason-Fourier transform is defined by
	\begin{align}\label{H-Ftr}
		\widehat{f}(\lambda,k\mathbb{M})=\,\int_{\mathbb{G}}\,f(gK)\,
		e^{(-i\lambda+\rho)\,\tau(k^{-1}g)}\,\diff{g}, \quad \lambda \in \mathbb{C}, \; k\in K.
	\end{align}
	Here, $\mathbb{M}$ denotes the centralizer of $\exp\mathfrak{a}$ in $K$. Let us also define the spherical transform of continuous compactly supported and radial functions by
	\begin{align*}
		\mathcal{H}f(\lambda)=\!\int_{\mathbb{G}}f(gK)\,\varphi_{-\lambda}(g)\, \diff{g},
	\end{align*}
	where $\varphi_{\lambda}$ is the
	elementary spherical function of index $\lambda\in\mathbb{C}$. The functions $\varphi_{\lambda}$ are normalized eigenfunctions of $\Delta$, that is, $\Delta \varphi_{\lambda}=-(\lambda^2+\rho^2)\varphi_{\lambda}$ with $\varphi_{\lambda}(o)=1$. They are also radial and have the property that $\varphi_{\lambda}=\varphi_{-\lambda}$. Finally, let us recall that in the case of radial Schwartz functions, we have $$\widehat{f}(\lambda,k\mathbb{M})=\mathcal{H}f(\lambda).$$

	\section{Fractional Laplacian and semigroups subordinated to the heat semigroup} \label{section3}
	This section deals with two families of operators related to the fractional Laplacian but both subordinated via integral representations to the heat semigroup. Interestingly, the Poisson semigroup belongs to both, for special values of their parameters. 
	
	In recent years there has been intensive research on various kinds of fractional order operators.
	Being nonlocal objects, local PDE techniques to treat nonlinear problems for the fractional operators do not apply. To overcome this difficulty, in the euclidean case, Caffarelli and Silvestre
	\cite{CaSi2007} studied the extension problem associated with the Laplacian and realized the fractional
	power as the map taking Dirichlet data to Neumann data. In \cite{ST2010} Stinga and Torrea related
	the extension problem for the fractional Laplacian to the heat semigroup, providing a subordination formula and conditions for the existence of an integral kernel. On certain classes
	of non-compact manifolds, which include symmetric spaces of non-compact type, the extension problem has been studied by Banica, González and Sáez \cite{BanEtAl}. Interestingly, in the non-compact
	setting one needs to have a precise control of the behavior of the metric at infinity and
	geometry plays a crucial role.
	
	To begin with, using the spectral theorem, one can define fractional powers of the Laplacian via the heat semigroup,
	$$(-\Delta)^{\sigma}f(x)=\int_{0}^{\infty}(e^{u\Delta}f(x)-f(x))\frac{du}{u^{1+\sigma}} \quad \text{ in } L^2(\mathcal{M}), \; f\in \text{Dom}(-\Delta). $$
	see \cite[(5), p.260]{Y}. Then, the relation between the fractional Laplacian and the extension problem (\ref{CS}) is the following.
	\begin{theorem} \cite{ST2010}. Let $\sigma\in (0,1)$. Then for  $f\in \text{Dom}((-\Delta)^{\sigma})$, a solution to the extension problem
		\begin{align}\label{extension}
			\Delta v+\frac{(1-2\sigma)}{t}\frac{\partial v}{\partial t}+\frac{\partial^2 v}{\partial t^2}=0, \quad  
			v(0,x)\,=\,f(x),\quad  t>0,\,\; x\in \mathcal{M},
		\end{align}
		is given by
		\begin{equation}\label{ST 1.9}
			T^{\sigma}_tf(x):=v(t,x)=\frac{t^{2\sigma}}{2^{2\sigma}\Gamma(\sigma)}\int_{0}^{+\infty}e^{u\Delta}f(x)\,e^{-\frac{t^2}{4u}}\frac{du}{u^{1+\sigma}}.
		\end{equation}
		Moreover, the fractional Laplacian  on $\mathcal{M}$ can be recovered through
		\begin{equation*}(-\Delta)^{\sigma}f(x)=-2^{2\sigma-1}\frac{\Gamma(\sigma)}{\Gamma(1-\sigma)}\lim_{t\rightarrow 0^{+}}t^{1-2\sigma}\frac{\partial v}{\partial t}(x,t).
		\end{equation*}
	\end{theorem}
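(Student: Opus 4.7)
The plan is to establish, in order, the initial condition $v(0,x) = f(x)$, the extension PDE, and the recovery formula for $(-\Delta)^\sigma$. The structural observation driving the whole calculation is that the extension equation can be recast as $t^{2\sigma-1}\partial_t(t^{1-2\sigma}\partial_t v) + \Delta v = 0$; the kernels $e^{-t^2/(4u)}u^{-1-\sigma}$, indexed by $u>0$, are the natural building blocks of this weighted Bessel operator, and formula (\ref{ST 1.9}) superposes them with weight $e^{u\Delta}f$.

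To verify the Dirichlet datum, first apply the change of variables $u = t^2/(4s)$ to rewrite (\ref{ST 1.9}) in the equivalent form
\begin{equation*}
T_t^\sigma f(x) = \frac{1}{\Gamma(\sigma)}\int_0^\infty e^{(t^2/(4s))\Delta}f(x)\,e^{-s}\,s^{\sigma-1}\,ds.
\end{equation*}
For each fixed $s>0$ one has $e^{(t^2/(4s))\Delta}f(x)\to f(x)$ as $t\to 0^+$ by strong continuity of the heat semigroup, and $\int_0^\infty e^{-s}s^{\sigma-1}\,ds = \Gamma(\sigma)$; dominated convergence (majorant supplied by the uniform boundedness of $u\mapsto e^{u\Delta}f$) then yields $v(0,x)=f(x)$.

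For the PDE, differentiate (\ref{ST 1.9}) under the integral: $\Delta v$ becomes an integral of $\partial_u e^{u\Delta}f$ by the heat equation, and integrating by parts in $u\in(0,\infty)$ transfers the derivative to the kernel with no boundary contribution, because $e^{-t^2/(4u)}$ decays super-exponentially at $u=0^+$ while $u^{-1-\sigma}$ kills the upper endpoint. Comparing the resulting expression with $\partial_t^2 v + \frac{1-2\sigma}{t}\partial_t v$ reduces the cancellation to the elementary identity
\begin{equation*}
-2\,e^{t^2/(4u)}\,\partial_u\!\left(e^{-t^2/(4u)}\,u^{-\sigma}\right)
= 2\sigma\,u^{-1-\sigma} - \tfrac{t^2}{2}\,u^{-2-\sigma},
\end{equation*}
which matches the two sides term by term.

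For the recovery formula, compute $\partial_t v$ from (\ref{ST 1.9}) and integrate by parts in $u$ via the same identity to obtain
\begin{equation*}
t^{1-2\sigma}\,\partial_t v(t,x) = -\frac{2}{4^\sigma\,\Gamma(\sigma)}\int_0^\infty (-\Delta)e^{u\Delta}f(x)\,e^{-t^2/(4u)}\,u^{-\sigma}\,du.
\end{equation*}
As $t\to 0^+$ the Gaussian factor increases monotonically to $1$; dominated convergence, combined with the Bochner subordination identity $(-\Delta)^\sigma f = \Gamma(1-\sigma)^{-1}\int_0^\infty(-\Delta)e^{u\Delta}f\,u^{-\sigma}\,du$, gives $\lim_{t\to 0^+} t^{1-2\sigma}\partial_t v = -2\,\Gamma(1-\sigma)\,4^{-\sigma}\Gamma(\sigma)^{-1}(-\Delta)^\sigma f$; using $2/4^\sigma = 2^{1-2\sigma}$ and solving for $(-\Delta)^\sigma f$ yields the stated constant $-2^{2\sigma-1}\Gamma(\sigma)/\Gamma(1-\sigma)$. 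The main analytic obstacle is the justification of these interchanges of limit and integration: the hypothesis $f\in\mathrm{Dom}((-\Delta)^\sigma)$ is exactly what supplies the integrability of $(-\Delta)e^{u\Delta}f\,u^{-\sigma}$ near $u=0$, while contractivity of $e^{u\Delta}$ together with stochastic completeness (available in the manifold setting of Section~\ref{Prelim}) controls the tail at $u=\infty$.
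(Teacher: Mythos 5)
The paper offers no proof of this statement—it is imported verbatim from Stinga--Torrea \cite{ST2010}—so there is no internal argument to compare against; your verification is, in substance, the standard subordination/spectral computation from that reference. The computations themselves check out: the substitution $u=t^{2}/(4s)$ gives the Gamma-normalized form and the Dirichlet datum; writing $\partial_t^2+\tfrac{1-2\sigma}{t}\partial_t=t^{2\sigma-1}\partial_t\bigl(t^{1-2\sigma}\partial_t\bigr)$ and integrating by parts once in $u$ (the boundary terms do vanish as you say) yields the PDE, your displayed identity being the one needed for the recovery step and, after commuting $\partial_t$ with $\partial_u$, for the PDE step as well; and the constant bookkeeping via $2\cdot4^{-\sigma}=2^{1-2\sigma}$ and $\int_0^\infty\lambda e^{-u\lambda}u^{-\sigma}\,du=\Gamma(1-\sigma)\lambda^{\sigma}$ reproduces $-2^{2\sigma-1}\Gamma(\sigma)/\Gamma(1-\sigma)$ correctly.

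One justification, however, is not right as stated: $f\in\mathrm{Dom}((-\Delta)^{\sigma})$ does \emph{not} in general make $u\mapsto\|(-\Delta)e^{u\Delta}f\|_{L^{2}}\,u^{-\sigma}$ integrable near $u=0$. The spectral bound only gives $\|(-\Delta)e^{u\Delta}f\|\lesssim u^{\sigma-1}\|(-\Delta)^{\sigma}f\|$, and a spectral measure such as $d\|E_{\lambda}f\|^{2}\asymp\lambda^{-2\sigma-1}(\log\lambda)^{-2}\,d\lambda$ (for large $\lambda$) produces an $f\in\mathrm{Dom}((-\Delta)^{\sigma})$ for which the integral diverges logarithmically, so the limit $t\to0^{+}$ cannot be pushed through a Bochner-absolutely convergent integral in $u$. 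The repair is the one \cite{ST2010} actually use: pass to the spectral side, where $t^{1-2\sigma}\partial_t v$ corresponds to the multiplier $-2\,4^{-\sigma}\Gamma(\sigma)^{-1}\int_0^\infty\lambda e^{-u\lambda}e^{-t^{2}/(4u)}u^{-\sigma}\,du$, dominated uniformly in $t$ by a constant times $\lambda^{\sigma}$ and converging pointwise in $\lambda$; dominated convergence against $d\|E_{\lambda}f\|^{2}$, finite precisely because $f\in\mathrm{Dom}((-\Delta)^{\sigma})$, then gives the limit in $L^{2}$. The same spectral reading also fixes the sense in which $v(0,\cdot)=f$ and legitimizes the differentiations under the integral sign; with that substitution your proof is complete.
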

	From a probabilistic point of view, the extension problem corresponds to the property
	that all symmetric stable processes can be obtained as traces of degenerate Bessel
	diffusion processes, see \cite{Stinga}. 
	
	However, despite the subordination of $\{T_t^{\sigma}\}_{t>0}$ to the heat semigroup, passing from the heat kernel to a Poisson kernel  is a non-trivial
	issue in the case of non-compact manifolds since one needs to control the behavior at
	infinity.  By \cite{BanEtAl, ST2010} and under the description of the heat semigroup given in the present paper, one needs to check 
	whether, given $x_0$, there exists a constant $C_{x_0}$ and $ \epsilon > 0$ such that the heat kernel on the manifold $\mathcal{M}$ satisfies
	\begin{equation}\label{STheat}
		\|h_t(\,.\,x_0)\|_{L^2(\mathcal{M})}+\|\partial_th_t(\,.\,x_0)\|_{L^2(\mathcal{M})} \leq C_{x_0}(1+t^{\epsilon})t^{-\epsilon}.
	\end{equation}
	Thus the problem of an integral kernel for the operator $T_t^{\sigma}$ reduces to obtaining suitable upper bounds for the heat
	kernel (from where one may derive information for its time derivatives as well, see for instance \cite{Gri1995}). Inequality \eqref{STheat} is true on real hyperbolic space as well as on non-compact symmetric spaces of arbitrary rank. It is also true on manifolds satisfying a volume doubling condition and the local Poincar{\'e} inequality \cite[Proposition 3.3]{BanEtAl}, such as manifolds of non-negative Ricci curvature, or more generally the manifolds considered in Corollary \ref{CorGen}. Then, the function $T_t^{\sigma}f$ in \eqref{ST 1.9} is given by
	$$T_t^{\sigma}f(x)=v(t,x)=\int_{\mathcal{M}} Q_t^{\sigma}(x,y)f(y)\,\diff{\mu(y)}, $$
	where the integral kernels are given by
	\begin{equation}\label{kernelCS}
		Q_t^{\sigma}(x,y)=\frac{t^{2\sigma}}{2^{2\sigma}\Gamma(\sigma)}\int_{0}^{+\infty}h_u(x,y)\,e^{-\frac{t^2}{4u}}\frac{\diff{u}}{u^{1+\sigma}}.
	\end{equation}

	We now pass to the fractional heat equation,  again for the manifolds mentioned above. Let $\alpha \in (0,2)$ and take $\eta_{t}^{\alpha}$ to be the inverse Laplace transform of the function $\exp\{-t\,(.)^{\alpha/2}\}$.
	The fractional Laplacian $(-\Delta)^{\alpha/2}$ is the infinitesimal generator of a standard isotropic $\alpha$-stable L{\'e}vy motion $X^{\alpha}_t$.
	This process is a L{\'e}vy process, which can be viewed as the long-time scaling limit of a random walk with power law jumps (\cite[Theorem 6.17]{Mesi2011}). Via subordination to the heat semigroup, we may write 
	$$e^{-t(-\Delta)^{\alpha/2}}=\int_0^{\infty}e^{u\Delta}\,\eta_{t}^{\alpha}(u)\,\diff{u},$$ 
	\cite[(7), p.260]{Y}. Then, for the manifolds considered in this note (those of Corollary \ref{CorGen} and rank one non-compact symmetric spaces), for any reasonable $f$ we have that 
	$$W_t^{\alpha}f(x):=e^{-t(-\Delta)^{\alpha/2}}f(x)=w(t,x)=\int_{\mathcal{M}} P_t^{\alpha}(x,y)f(y)\,\diff{\mu(y)}, $$
	where the kernels are given by
	\begin{equation}\label{kernelFH}
		P_t^{\alpha}(x,y)=\int_0^{\infty}h_u(x)\,\eta_{t}^{\alpha}(u)\,\diff{u}.
	\end{equation}
	The  family $\{W^{\alpha}_t\}_{t>0}$ is a $C_0$- semigroup, \cite{Y}. For every $L^1(\mathcal{M})$ (for an optimal class of initial data on euclidean space, see \cite{BSV17}), the function $w(t,\, . \,)=W^{\alpha}_tf$ solves the initial value problem 
	\begin{align}\label{fractionalheat}
		\partial_{t}w(t,x)+(-\Delta)^{\alpha/2} w(t,x)=0, \quad 
		w(0,x)\,=\,f(x), \quad t>0,\,\,x\in\mathcal{M}.
	\end{align} 
	In the case of hyperbolic spaces, the $\alpha$-stable process  with transition densities
	$P_t^{\alpha}(x,y)$  was first defined by Getoor \cite{G61} (see also \cite{GS04} for general non-compact symmetric spaces). 
	
	Finally, observe that owing to the subordination formulas and the properties of the heat kernel, both kernels $Q_t^{\sigma}$, $P_t^{\alpha}$ are non-negative and  symmetric. They are also probability measures: to see this, recall first that the manifolds considered here are stochastically complete.  Then, by a Fubini argument, the claim follows for $Q_t^{\sigma}$  by the definition of Gamma function, while for $P_t^{\alpha}$ from the fact that $\int_{0}^{\infty}\eta_{t}^{\alpha}(u)\, \diff{u}=1$, \cite[Eq (14), p.262]{Y}.

	\section{Asymptotics for two operators related to the fractional Laplacian:
	\linebreak
		the case of non-negative Ricci curvature}
	Throughout this section, we consider $\mathcal{M}$ to be a manifold of non-negative Ricci curvature -or more generally, we consider the Riemannian manifolds of Corollary \ref{CorGen}- and study the long-time properties of the families of operators considered in Section \ref{section3}.  An essential idea of the proof is the use of the H{\"o}lder continuity of the heat kernel, owing to subordination formulas of their kernels. 
	
	\subsection{A suitable class of kernels}\label{sec:class}
	
	We begin by introducing a suitable class of kernels to unify our approach for the families of operators $\{T_t^{\sigma}\}_{t>0}$, $\sigma\in(0,1)$ and $\{W_t^{\alpha}\}_{t>0}$, $\alpha \in (0,2)$. 
	
	\begin{definition}
		Suppose $\gamma>0$. We say that a family of measurable functions $(\psi_t^{\gamma})_{t>0}$ on $\mathcal{M}$ \textit{belongs to the class $\mathcal{P}_{\gamma}$}, and write $(\psi_t^{\gamma}) \in \mathcal{P}_{\gamma}$,  if 
		\begin{enumerate}
			\item[(P1)]\label{P1} For all $t>0$, $\psi_t^{\gamma}$ is positive and symmetric on $\mathcal{M}$, i.e. $0<\psi_{t}^{\gamma}(x,y)=\psi_{t}^{\gamma}(y,x)$ for all $x,y\in \mathcal{M};$ 
			
			\item[(P2)]\label{P2} For all $t>0$, for all $x\in \mathcal{M}$,  it holds
			$$\int_{\mathcal{M}}\psi_t^{\gamma}(x,y)\,\diff{\mu}(y)=1  \quad \text{and} \quad  \|\psi_t^{\gamma}(x,\, .\,)\|_{L^{\infty}(\mathcal{M})}\asymp V(x,t^{1/\gamma})^{-1};$$
			
			\item[(P3)]\label{P3} 	There is a constant $C > 1$ such that for all $x,y,x_0\in \mathcal{M}$ such that $d(x_0,y) \leq t^{1/\gamma}$, we have
			$$C^{-1} \leq
			\frac{\psi_t^{\gamma}(x,y)}{\psi_t^{\gamma}(x,x_0)}\leq C;$$
			
			\item[(P4)]\label{P4}  There is a constant $\theta_{\gamma} > 0$ such that $d(x_0,y) \leq \xi$ implies there is $t_0(\xi, \gamma)>0$ such that
			$$|\psi_t^{\gamma}(x,x_0)-\psi_t^{\gamma}(x,y)|\leq C(\xi,\gamma,\mathcal{M})\,t^{-\theta_{\gamma}}\,\psi_t(x,x_0)$$
			for all $x\in \mathcal{M}$, for all $t>t_0(\xi, \gamma)$.
		\end{enumerate}
	\end{definition}
	
	\begin{theorem}\label{MainThmCS} 
		Fix $\gamma>0$. Let $(\psi_{t}^{\gamma})\in \mathcal{P}_{\gamma}$, and consider the integral operator
		$$\mathcal{K}_t^{\gamma} (f)(x)=\int_{\mathcal{M}}\psi_t^{\gamma}(x,y)\,f(y)\,\diff\mu(y), \quad x\in \mathcal{M}$$  acting on functions
		$f\in {L}^{1}(\mathcal{M})$. Set $M=\int_{\mathcal{M}}f\, d\mu$ and fix a basepoint $x_{0}\in \mathcal{M}$.  Then,  as $%
		t\rightarrow +\infty $,
		\begin{equation}
			\Vert \mathcal{K}_t^{\gamma}(f)-\,M\,\psi_t^{\gamma}(\,.\,,x_{0})\Vert _{L^{1}(\mathcal{M}%
				)}\,\longrightarrow \,0  \label{CSL1}
		\end{equation}%
		and
		\begin{equation}
			\,\Vert \left\vert \mathcal{K}_t^{\gamma}(f)\,-\,M\, \psi_t^{\gamma}(\,.\,,x_{0})\right\vert V(\,.\,,
			t^{1/\gamma})\Vert _{L^{\infty }(\mathcal{M})}\,\longrightarrow 0.  
			\label{CSsup}
		\end{equation}
	\end{theorem}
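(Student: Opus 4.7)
My strategy is to reduce by density to continuous compactly supported initial data, handle that case using property (P4), and then transfer back to $L^1(\mathcal{M})$ by approximation. The starting point is the elementary identity
\[
\mathcal{K}_t^{\gamma}(f)(x)-M\,\psi_t^{\gamma}(x,x_{0})
=\int_{\mathcal{M}}\bigl[\psi_t^{\gamma}(x,y)-\psi_t^{\gamma}(x,x_{0})\bigr]\,f(y)\,\diff\mu(y),
\]
which uses only $\int\psi_t^{\gamma}(x,y)\,\diff\mu(y)=1$ from (P2). From (P1)-(P2) it also follows that $\mathcal{K}_t^{\gamma}$ is an $L^{1}$-contraction and that $\int_{\mathcal{M}}\psi_t^{\gamma}(x,x_{0})\,\diff\mu(x)=1$ (by symmetry and mass one). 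Both facts are used repeatedly below.

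Suppose first $f\in C_{c}(\mathcal{M})$ with $\supp f\subset B(x_{0},\xi)$. For $y$ in the support, (P4) gives, for every $t>t_{0}(\xi,\gamma)$,
\[
|\psi_t^{\gamma}(x,y)-\psi_t^{\gamma}(x,x_{0})|\le C(\xi,\gamma,\mathcal{M})\,t^{-\theta_{\gamma}}\,\psi_t^{\gamma}(x,x_{0}),
\]
uniformly in $x\in\mathcal{M}$. Plugging this into the identity above yields the \emph{pointwise} estimate
\[
|\mathcal{K}_t^{\gamma}(f)(x)-M\,\psi_t^{\gamma}(x,x_{0})|\le C(\xi,\gamma,\mathcal{M})\,t^{-\theta_{\gamma}}\,\|f\|_{L^{1}}\,\psi_t^{\gamma}(x,x_{0}).
\]
Both desired convergences follow from this single bound: for \eqref{CSL1}, integrate in $x$ and use $\int\psi_t^{\gamma}(x,x_{0})\,\diff\mu(x)=1$; for \eqref{CSsup}, multiply by $V(x,t^{1/\gamma})$ and use $V(x,t^{1/\gamma})\,\psi_t^{\gamma}(x,x_{0})\le C$, which is immediate from (P2). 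The factor $t^{-\theta_{\gamma}}$ then delivers the convergence to zero.

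For general $f\in L^{1}(\mathcal{M})$, fix $\epsilon>0$ and pick $g\in C_{c}(\mathcal{M})$ with $\|f-g\|_{L^{1}}<\epsilon$. Writing the error as
\[
\mathcal{K}_t^{\gamma}(f)-M_f\psi_t^{\gamma}(\cdot,x_{0})
=\mathcal{K}_t^{\gamma}(f-g)+\bigl[\mathcal{K}_t^{\gamma}(g)-M_g\psi_t^{\gamma}(\cdot,x_{0})\bigr]+(M_g-M_f)\,\psi_t^{\gamma}(\cdot,x_{0}),
\]
the middle term tends to zero by the compact case. The outer two terms are controlled by $\epsilon$: in $L^{1}$, directly from the $L^{1}$-contraction and $\|\psi_t^{\gamma}(\cdot,x_{0})\|_{L^{1}}=1$; in the weighted sup-norm, using the uniform bound $V(x,t^{1/\gamma})\,\psi_t^{\gamma}(x,y)\le C$ coming from (P2), so that $V(x,t^{1/\gamma})\,|\mathcal{K}_t^{\gamma}(f-g)(x)|\le C\|f-g\|_{L^{1}}\le C\epsilon$. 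Taking $\limsup_{t\to\infty}$ and letting $\epsilon\to 0$ closes both statements. The only substantive step is therefore the compact-support estimate driven by (P4); the abstract arithmetic is forced by (P1)-(P2). The real difficulty is pushed outside this theorem, namely verifying that the kernels $Q_t^{\sigma}$ and $P_t^{\alpha}$ of Section~\ref{section3} genuinely lie in $\mathcal{P}_{\gamma}$ (in particular, that the heat-kernel H\"older estimate \eqref{Holder} survives the subordination integrals with a quantitative $t^{-\theta_{\gamma}}$ decay), which I expect to be the main technical obstacle in the subsequent sections.
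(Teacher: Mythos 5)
Your proof is correct and follows essentially the same route as the paper: reduce to $f\in C_c(B(x_0,\xi))$, apply (P4) inside the identity $\mathcal{K}_t^{\gamma}(f)-M\psi_t^{\gamma}(\cdot,x_0)=\int f(y)(\psi_t^{\gamma}(\cdot,y)-\psi_t^{\gamma}(\cdot,x_0))\,\diff\mu(y)$, and conclude via the mass and $L^\infty$ bounds in (P1)--(P2), with the general $L^1$ case handled by density (which the paper only cites from earlier work but you spell out correctly). The only cosmetic difference is that you bound $\psi_t^{\gamma}(x,x_0)$ directly by $V(x,t^{1/\gamma})^{-1}$ in the sup-norm step, whereas the paper first invokes (P3) to pass to $\psi_t^{\gamma}(x,y)$; both are valid.
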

	
	\begin{remark}
		By convexity between (\ref{CSL1}) and (\ref{CSsup}%
		), we obtain for any $p\in \left( 1,\infty \right) $ 
		\begin{equation*}
			\,\Vert \left\vert \mathcal{K}_t^{\gamma}(f)(\,.\,)\,-\,M\,\psi_t^{\gamma}(\,.\,,x_{0})\right\vert V(\,.\,,%
			t^{1/\gamma})^{1/p^{\prime }}\Vert _{L^{p}(\mathcal{M})}\,\longrightarrow \,0%
			\mathnormal{.}  
		\end{equation*}
	\end{remark}

	Let us stress that the conditions (P1)-(P4) are not necessarily optimal. The class $\mathcal{P}_{\gamma}$ and the above conditions will simply allow us to avoid repeating several steps when proving convergence to the fundamental solution for the two families of operators $\{T_t^{\sigma}\}_{t>0}$, $\sigma\in(0,1)$ and $\{W_t^{\alpha}\}_{t>0}$, $\alpha \in (0,2)$. 
	
	To prove Theorem \ref{MainThmCS}, it is sufficient to consider the action of the operator $\mathcal{K}_t^{\gamma}$ on continuous
	compactly supported functions $f$. Then, owing to the properties (P1)-(P4), one can show that the desired convergence remains
	valid for the whole class of $L^{1}(\mathcal{M})$ initial data by using a density argument, see for instance the arguments in \cite[pp.17-18]{APZ2023} or \cite[pp.11-12]{GPZ2022}. Therefore, it suffices to prove the following result, which also gives a rate of convergence for continuous compactly supported initial data.
	\begin{proposition}
		\label{S3 proposition}
		Fix $\gamma>0$ and a basepoint $x_{0}\in \mathcal{M}$. Let $f\in \mathcal{C}%
		_{c}(B(x_{0},\xi))$ for some $\xi>0$ and set $M=\int_{\mathcal{M}}f\, d\mu$. Then, for all $t>t_0(\xi, \gamma)$, it holds
		\begin{equation*}
			\Vert \mathcal{K}_t^{\gamma}(f)-\,M\, \psi_{t}^{\gamma}(\,.\,,x_{0})\Vert _{L^{1}(\mathcal{M})}\,\leq C(\xi,\gamma,\mathcal{M})\,t^{-\theta_{\gamma}}
			\label{L1 conv Cc}
		\end{equation*}
		and 
		\begin{equation*}
			\,\Vert \left\vert \mathcal{K}_t^{\gamma}(f)\,-\,M\, \psi_t^{\gamma}(\,.\,,x_{0})\right\vert V(\,.\,,
			t^{1/\gamma})\Vert _{L^{\infty }(\mathcal{M})}\leq C(\xi,\gamma,\mathcal{M})\,t^{-\theta_{\gamma}}.
			\label{Linf conv Cc}
		\end{equation*}
	\end{proposition}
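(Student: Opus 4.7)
The plan is to exploit the fact that $\psi_t^{\gamma}(\,\cdot\,,x_0)$ is a probability measure together with the uniform H\"older-type estimate (P4) to bound both norms almost simultaneously. The starting point is the identity
\[
\mathcal{K}_t^{\gamma}(f)(x) - M\,\psi_t^{\gamma}(x,x_0) \,=\, \int_{B(x_0,\xi)} \bigl[\psi_t^{\gamma}(x,y) - \psi_t^{\gamma}(x,x_0)\bigr]\, f(y)\, \diff\mu(y),
\]
which follows from writing $M=\int_{\mathcal{M}}f\,\diff\mu$ and using that $f$ is supported in $B(x_0,\xi)$. Since $d(x_0,y)\leq \xi$ throughout this integration, property (P4) gives, for $t>t_0(\xi,\gamma)$,
\[
|\psi_t^{\gamma}(x,y) - \psi_t^{\gamma}(x,x_0)| \,\leq\, C(\xi,\gamma,\mathcal{M})\, t^{-\theta_{\gamma}}\, \psi_t^{\gamma}(x,x_0)
\]
uniformly in $x\in\mathcal{M}$. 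Inserting this into the integral above and pulling the constants out, I obtain the pointwise master bound
\[
|\mathcal{K}_t^{\gamma}(f)(x) - M\,\psi_t^{\gamma}(x,x_0)| \,\leq\, C(\xi,\gamma,\mathcal{M})\, t^{-\theta_{\gamma}}\, \|f\|_{L^1(\mathcal{M})}\, \psi_t^{\gamma}(x,x_0).
\]

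From this single estimate both statements are then immediate. For the weighted sup bound, I would multiply by $V(x,t^{1/\gamma})$ and invoke the second half of (P2), namely $\psi_t^{\gamma}(x,x_0)\leq \|\psi_t^{\gamma}(x,\,\cdot\,)\|_{L^{\infty}(\mathcal{M})}\leq C\,V(x,t^{1/\gamma})^{-1}$, so the volume factor cancels and the $t^{-\theta_{\gamma}}$ rate remains. For the $L^1$ bound, I would integrate the master bound in $x$ over $\mathcal{M}$ and use the symmetry in (P1) together with the normalization in (P2) to conclude $\int_{\mathcal{M}}\psi_t^{\gamma}(x,x_0)\,\diff\mu(x)=\int_{\mathcal{M}}\psi_t^{\gamma}(x_0,x)\,\diff\mu(x)=1$, again yielding the claimed $t^{-\theta_{\gamma}}$ rate.

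There is no real obstacle in this particular argument once the class $\mathcal{P}_{\gamma}$ is in place: the conditions (P1)--(P4) have been tailored precisely so that, on compactly supported $f$, the proof reduces to a one-line application of (P4) on $\supp f$ followed by trivial bookkeeping with the probability-measure and max-value properties. The genuine difficulty of the paper is pushed to the companion step, not treated in this proposition, of verifying that the actual kernels $Q_t^{\sigma}$ and $P_t^{\alpha}$ defined by the subordination formulas \eqref{kernelCS} and \eqref{kernelFH} belong to $\mathcal{P}_{\gamma}$; in particular, establishing (P4) requires combining the H\"older continuity \eqref{Holder} of the heat kernel with the subordinating densities, which is where the Li--Yau hypothesis enters in an essential way. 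The only delicacy inside the current proof is ensuring that the constant in (P4) is truly uniform in $x\in\mathcal{M}$, as that uniformity is exactly what allows both the pointwise-in-$x$ (sup) and integrated-in-$x$ ($L^1$) conclusions to be read off from the same master inequality.
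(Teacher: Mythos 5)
Your proposal is correct and follows essentially the same route as the paper: the same reduction of the difference to an integral over $\supp f\subset B(x_0,\xi)$, the same application of (P4) to get a pointwise master bound, and the same use of (P1)--(P2) to conclude both the $L^1$ and the weighted sup estimates. The only (harmless) deviation is that you bound $\psi_t^{\gamma}(x,x_0)$ directly by $\|\psi_t^{\gamma}(x,\,\cdot\,)\|_{L^{\infty}(\mathcal{M})}\asymp V(x,t^{1/\gamma})^{-1}$ via (P2), whereas the paper first invokes (P3) to compare $\psi_t^{\gamma}(x,x_0)$ with $\psi_t^{\gamma}(x,y)$ before using (P2); your shortcut renders (P3) unnecessary in this particular step.
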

	
	\begin{proof}

		First of all, observe that the operator $\mathcal{K}^{\gamma}_{t}$ is bounded on $L^1(\mathcal{M})$, due to (P1) and (P2).
		Write
		\begin{align}
			\mathcal{K}_t^{\gamma}(f)(x)\,-\,M\,\psi_{t}^{\gamma}(x,x_{0})\,& =\,\int_{\mathcal{M}}\,f(y)\,\left(
			\psi_{t}^{\gamma}(x,y)\,-\psi_{t}^{\gamma}(x,x_{0})\right)\,\diff{\mu(y)}  \notag \\
			& =\int_{B\left( x_{0},\xi\right) }\,f(y)\,\left(
			\psi_{t}^{\gamma}(x,y)\,-\psi_{t}^{\gamma}(x,x_{0})\right)\,\diff{\mu(y)}.  \label{comment}
		\end{align}

		Therefore, by \eqref{comment}, (P1), (P4) and by integrating in $x$ over $\mathcal{M}$, we obtain 
		\begin{align*}
			\int_{\mathcal{M}}\,|\mathcal{K}_t^{\gamma}(f)(x)\,-\,M\,\psi_t^{\gamma}(x,x_{0})|\,\diff{\mu(x)}\,&\leq C(\xi,\gamma,\mathcal{M})\,t^{-\theta_{\gamma}}
			\int_{\mathcal{M}} \psi_t^{\gamma}(x,x_{0})\,\diff{\mu(x)}\int_{B\left( x_{0},\xi\right) }|f(y)|\,\diff{\mu(y)}\\
			&\lesssim t^{-\theta_{\gamma}} \int_{\mathcal{M}} \psi_t^{\gamma}(x,x_{0})\,\diff{\mu(x)}\,\|f\|_{L^1(\mathcal{M})}\lesssim t^{-\theta_{\gamma}},
		\end{align*}
		for $t$ large enough such that 
		$d(x_0,y)\le{\xi}\le t^{1/\gamma}$, where in the last step we used (P2). This proves the desired $L^1(\mathcal{M})$ result.

		We now turn to the proof of the sup norm asymptotics. For $t$ large enough so that 
		$d(x_0,y)\le{\xi}\le t^{1/\gamma}$, by (P3) we get $\psi_t^{\gamma}(x,x_0)\asymp \psi_t^{\gamma}(x,y)$ for all $x\in \mathcal{M}$. Therefore, by \eqref{comment}, (P1) and (P4) we get
		\begin{align*}
			|\mathcal{K}_t^{\gamma}(f)(x)\,-\,M\,\psi_t^{\gamma}(x,x_{0})|&\leq   C(\xi,\gamma,\mathcal{M})\,t^{-\theta_{\gamma}}\int_{B\left( x_{0},\xi\right) }\,\psi_{t}^{\gamma}(x,y)\,|f(y)|\,\diff{\mu(y)}\\
			&\lesssim t^{-\theta_{\gamma}}  \,\sup _{y\in\mathcal{M}}\psi_{t}^{\gamma}(x,y) \,\|f\|_{L^1(\mathcal{M})} \\
			&\lesssim t^{-\theta_{\gamma}} \, V(x,t^{1/\gamma})^{-1},
		\end{align*}
		where in the last step we used (P2). The claim follows.
	\end{proof}

	\subsection{Asymptotics for solutions to the Caffarelli-Silvestre extension problem}
	In this subsection, we study the large-time asymptotic behavior of the family of operators $\{T_t^{\sigma}\}_{t>0}$. To this end, we first give some indispensable estimates concerning the kernels $Q_t^{\sigma}$ and use them to prove that they belong to the class $\mathcal{P}_1$ for all $\sigma\in (0,1)$.
	
	\begin{lemma}\label{aux lemma Q}
		For all $x,y\in \mathcal{M}$, all $t>0$, and all constants $c>0$ and $\kappa>1$, it holds
		$$\int_{0}^{\infty}V(x,\sqrt{u})^{-1}\,e^{-\frac{t^2}{4u}}\,e^{-c\frac{d^2(x,y)}{u}} \, \frac{du}{u^{\kappa}}\asymp \frac{1}{V(x, t+d(x,y))}\,\frac{1}{(t+d(x,y))^{2(\kappa-1)}},$$
		where the implied constants depend only on $c, \kappa$ and $\mathcal{M}$.
		
		In addition, for fixed $r>0$, we have
		$$V(x, t+d(x,y))\,(t+d(x,y))^{2(\kappa-1)}\int_{0}^{r}V(x,\sqrt{u})^{-1}\,e^{-\frac{t^2}{4u}}\,e^{-c\frac{d^2(x,y)}{u}} \, \frac{du}{u^{\kappa}}=\textrm{O}(t^{-N}), \quad \forall N>0$$
		as $t\rightarrow +\infty,$ where the implied constant depends in addition on $r$, $N$.
	\end{lemma}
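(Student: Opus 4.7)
The plan is to collapse the two Gaussian factors into one by writing $a:=t^2/4+c\,d^2(x,y)$, so that $e^{-t^2/(4u)}e^{-c\,d^2(x,y)/u}=e^{-a/u}$. Note $\sqrt a\asymp t+d(x,y)$, with constants depending only on $c$, and hence $V(x,\sqrt a)\asymp V(x,t+d(x,y))$ by the doubling comparison \eqref{S2 Comp same center}. The first claim thus reduces to
\[
I:=\int_{0}^{\infty}\frac{e^{-a/u}}{V(x,\sqrt u)}\,\frac{du}{u^{\kappa}}\;\asymp\;\frac{1}{V(x,\sqrt a)\,a^{\kappa-1}}.
\]

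The key move is the change of variable $u=a/v$, which turns $I$ into the dimensionless Laplace-type integral
\[
I\;=\;\frac{1}{a^{\kappa-1}}\int_{0}^{\infty}\frac{e^{-v}\,v^{\kappa-2}}{V(x,\sqrt{a/v})}\,dv,
\]
isolating the target prefactor $a^{-(\kappa-1)}$ and concentrating the remaining mass near $v\asymp 1$. I would then split at $v=1$ and apply the polynomial bounds of \eqref{S2 Comp same center}: for $v\le 1$ one has $V(x,\sqrt{a/v})^{-1}\le C v^{\nu'/2}/V(x,\sqrt a)$, and for $v\ge 1$ one has $V(x,\sqrt{a/v})^{-1}\le C v^{\nu/2}/V(x,\sqrt a)$. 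Pulling $V(x,\sqrt a)^{-1}$ outside, what remains is a convergent integral of $e^{-v}v^{\kappa-2}$ against polynomial corrections; integrability at infinity is provided by the exponential, while integrability at $0$ is precisely where the hypothesis $\kappa>1$ is used (the borderline exponent being $\kappa-2+\nu'/2>-1$). The matching lower bound is obtained by restricting the $v$-integral to $[1,2]$, where doubling gives $V(x,\sqrt{a/v})\asymp V(x,\sqrt a)$ and the integrand is bounded below by a positive constant times $V(x,\sqrt a)^{-1}$. Together these yield $I\asymp V(x,\sqrt a)^{-1}a^{-(\kappa-1)}$, which is the first assertion.

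For the small-$u$ refinement I would exploit $e^{-a/u}=e^{-a/(2u)}e^{-a/(2u)}$ and the monotonicity $e^{-a/(2u)}\le e^{-a/(2r)}$ on $(0,r]$ to peel off an exponentially small prefactor:
\[
\int_{0}^{r}\frac{e^{-a/u}}{V(x,\sqrt u)}\,\frac{du}{u^{\kappa}}\;\le\;e^{-a/(2r)}\int_{0}^{\infty}\frac{e^{-a/(2u)}}{V(x,\sqrt u)}\,\frac{du}{u^{\kappa}}.
\]
The first part applied with $a$ replaced by $a/2$ (harmless, as $\sqrt{a/2}\asymp\sqrt a$) bounds the right-hand integral by $C/(V(x,\sqrt a)\,a^{\kappa-1})$. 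Multiplying by $V(x,\sqrt a)\,a^{\kappa-1}$ therefore leaves only $Ce^{-a/(2r)}\le Ce^{-t^2/(8r)}$, which is $O(t^{-N})$ for every $N$ as $t\to\infty$, since $r$ is fixed.

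The only delicate point of the argument is the integrability at $v=0$ after the change of variables; this is where the assumption $\kappa>1$ is crucially (and, together with $\nu'>0$, only) used. Beyond that, everything is a routine Laplace-type calculation driven by the polynomial doubling of volumes.
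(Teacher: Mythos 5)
Your proof is correct and follows essentially the same route as the paper: a change of variables $u\mapsto a/v$ (the paper uses $u\mapsto (t^2+d^2(x,y))/u$ after absorbing both Gaussians into one with constant $\min\{c,1/4\}$), a split at $v=1$ with the doubling exponents $\nu',\nu$ from \eqref{S2 Comp same center}, and for the $(0,r)$ part the same trick of peeling off an exponentially small factor $e^{-a/(2r)}$ and reusing the global bound. The only differences are cosmetic: you combine the exponentials exactly via $a=t^2/4+c\,d^2(x,y)$ and you spell out the lower bound by restricting to $v\in[1,2]$, where the paper merely notes it "follows similarly."
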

	
	\begin{proof}
		Set $$I:=\int_{0}^{\infty}V(x,\sqrt{u})^{-1}\,e^{-\frac{t^2}{4u}}\,e^{-c\frac{d^2(x,y)}{u}}\,\frac{du}{u^{\kappa}}.$$ Then, for $C:=\min\{c, 1/4\}$, we have
		\begin{align*}
			I&\leq\int_{0}^{\infty} \, V(x, \sqrt{u})^{-1} e^{-C\frac{t^2+d^2(x,y)}{u}}\,\frac{\diff{u}}{u^{\kappa}}\\
			&\lesssim \frac{1}{(t^2+d^2(x,y))^{\kappa-1}} \int_{0}^{\infty} \,  V\left(x, \sqrt{\frac{t^2+d^2(x,y)}{u}}\right)^{-1} e^{-Cu}\,u^{\kappa-2}\,\diff{u}\\
			&\lesssim \frac{1}{(t^2+d^2(x,y))^{\kappa-1}}\frac{1}{V\left(x, \sqrt{t^2+d^2(x,y)}\right)} \times\\ &\times\left(\int_{0}^{1}e^{-Cu}\,u^{\frac{\nu'}{2}}\,u^{\kappa-2}\,\diff{u}+ \int_{1}^{\infty} e^{-Cu}\,u^{\frac{\nu}{2}}\,u^{\kappa-2}\,\diff{u}\right) \\
			&\lesssim \frac{1}{V(y, t+d(x,y))} \,\frac{1}{ (t+d(x,y))^{2(\kappa-1)}}, 
		\end{align*}  
		where for the last two inequalities we used the volume doubling property \eqref{S2 Comp same center}.
		The lower bound follows similarly.

		Last, as far as asymptotics are concerned, observe that for all $C>0$, we have
		\begin{align*}
			\int_{0}^{r} \, V(y, \sqrt{u})^{-1} e^{-C\frac{t^2+d^2(x,y)}{u}}\,\frac{\diff{u}}{u^{\kappa}}&\lesssim  e^{-\frac{C}{2}\frac{t^2+d^2(x,y)}{r}}\int_{0}^{r} \, V(y, \sqrt{u})^{-1} e^{-\frac{C}{2}\frac{t^2+d^2(x,y)}{u}}\,\frac{\diff{u}}{u^{\kappa}}.
		\end{align*}
		The additional exponential term on the right-hand side allows for the fast decay in $t$ as  $t\rightarrow+\infty$, while for the remaining integral we may conclude as before. 
	\end{proof}	
	
	\begin{corollary}\label{Cor Q}
		There is a constant $C \geq 1$ such that if $d(y,z) \leq t$, then
		$$C^{-1} \leq
		\frac{Q^{\sigma}_t(x,y)}{Q^{\sigma}_t(x,z)}\leq C.$$
	\end{corollary}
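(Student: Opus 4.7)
The plan is to obtain a sharp two-sided pointwise estimate for $Q_{t}^{\sigma}(x,y)$, and then read off the Harnack-type comparison directly from that estimate.

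First, I would plug the Li--Yau bounds \eqref{LY} into the subordination formula \eqref{kernelCS}. Writing $h_u(x,y)$ from either side by $V(x,\sqrt{u})^{-1}\exp(-c\,d^{2}(x,y)/u)$ (using symmetry of the heat kernel and volume doubling to swap $V(x,\sqrt{u})$ and $V(y,\sqrt{u})$ at the cost of the constants), I obtain
\[
Q_t^{\sigma}(x,y)\,\asymp\,t^{2\sigma}\int_{0}^{\infty}\frac{1}{V(x,\sqrt{u})}\,e^{-\frac{t^{2}}{4u}}\,e^{-c\,\frac{d^{2}(x,y)}{u}}\,\frac{du}{u^{1+\sigma}}.
\]
Applying Lemma \ref{aux lemma Q} with $\kappa=1+\sigma$ (so $2(\kappa-1)=2\sigma$), this yields the closed-form equivalence
\[
Q_t^{\sigma}(x,y)\,\asymp\,\frac{t^{2\sigma}}{V(x,\,t+d(x,y))\,(t+d(x,y))^{2\sigma}},
\]
with implied constants depending only on $\sigma$ and $\mathcal{M}$.

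Given this estimate, the corollary reduces to comparing the right-hand sides for $y$ and for $z$ under the hypothesis $d(y,z)\le t$. The triangle inequality gives $|d(x,y)-d(x,z)|\le d(y,z)\le t$, which in turn implies
\[
\tfrac{1}{2}\bigl(t+d(x,z)\bigr)\,\le\,t+d(x,y)\,\le\,2\bigl(t+d(x,z)\bigr).
\]
Hence $(t+d(x,y))^{2\sigma}\asymp(t+d(x,z))^{2\sigma}$, and from the volume doubling property \eqref{S2 Comp same center} (applied with the same center $x$ to radii that are comparable up to a factor $2$) one gets $V(x,\,t+d(x,y))\asymp V(x,\,t+d(x,z))$. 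Combining these comparisons with the two-sided estimate above yields $Q_t^{\sigma}(x,y)\asymp Q_t^{\sigma}(x,z)$, which is the desired inequality.

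The only genuine step of substance is the reduction to the closed-form estimate, and that work has already been done in Lemma \ref{aux lemma Q}; once that lemma is in hand, the rest is a two-line triangle-inequality-plus-doubling argument, so there is no real obstacle.
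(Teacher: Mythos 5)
Your proposal is correct and follows essentially the same route as the paper: both derive the two-sided bound $Q_t^{\sigma}(x,y)\asymp V(x,t+d(x,y))^{-1}\,t^{2\sigma}(t+d(x,y))^{-2\sigma}$ from the Li--Yau estimate, the subordination formula and Lemma \ref{aux lemma Q} with $\kappa=1+\sigma$, and then conclude via the triangle inequality $t+d(x,y)\asymp t+d(x,z)$ together with volume doubling.
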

	
	\begin{proof}
		Observe first that for any $\gamma>0$, if  $d(y,z)\leq t^{1/\gamma}$, then the triangle inequality implies 
		\begin{equation}\label{ineqTR}
			\frac{1}{2}\leq \frac{t^{1/\gamma}+d(x,y)}{2\,t^{1/\gamma}+d(x,y)}\leq \frac{t^{1/\gamma}+d(x,y)}{t^{1/\gamma}+d(x,z)}\leq \frac{2\,t^{1/\gamma}+d(x,z)}{t^{1/\gamma}+d(x,z)}\leq 2.
		\end{equation}
		Recall next the subordination formula \eqref{kernelCS} for $Q_t^{\sigma}$ and the double-sided heat kernel estimates \eqref{LY}. Then, the claim is a simple consequence of Lemma \ref{aux lemma Q}  for $\kappa=\sigma+1$, which yields
		\begin{equation}\label{kernelQ UL}
			Q_t^{\sigma}(x,y)\asymp \frac{1}{V(x, t+d(x,y))} \,\frac{t^{2\sigma}}{ (t+d(x,y))^{2\sigma}},
		\end{equation}
		the doubling volume condition \eqref{S2 Comp same center} and the inequality \eqref{ineqTR} for $\gamma=1$. 
	\end{proof}
	
	We are now in a position to prove that 
	$$(Q_t^{\sigma})\in \mathcal{P}_1 \quad \text{for all} \quad \sigma\in (0,1).$$ Indeed, as already mentioned, due to the subordination formula \eqref{kernelCS}, the kernel $Q_t^{\sigma}$ satisfies (P1) as well as the first assertion of (P2). The second assertion of (P2) follows immediately from \eqref{kernelQ UL}. (P3) follows by Corollary \ref{Cor Q}. 
	
	Finally, we prove (P4). Fix a basepoint $x_0\in \mathcal{M}$ and consider $y\in \mathcal{M}$ such that $d(x_0,y)< \xi$. 
	
	Write
	\begin{align*}
		|Q_t^{\sigma}(x,y)-Q_t^{\sigma}(x,x_0)|&\leq \frac{t^{2\sigma}}{2^{2\sigma}\Gamma(\sigma)}\int_{0}^{+\infty}|h_u(x,y)-h_u(x,x_0)|\,e^{-\frac{t^2}{4u}}\frac{\diff u}{u^{1+\sigma}}\\
		&:=I_1+I_2, 
	\end{align*}
	where 
	\begin{align*}
		I_1&=\frac{t^{2\sigma}}{2^{2\sigma}\Gamma(\sigma)}\int_{0}^{\xi^2}|h_u(x,y)-h_u(x,x_0)|\,e^{-\frac{t^2}{4u}}\frac{ \diff u}{u^{1+\sigma}}, \\ I_2&=\frac{t^{2\sigma}}{2^{2\sigma}\Gamma(\sigma)}\int_{\xi^2}^{+\infty}|h_u(x,y)-h_u(x,x_0)|\,e^{-\frac{t^2}{4u}}\frac{\diff u}{u^{1+\sigma}}.
	\end{align*}
	Let us first start with $I_2$. Since $u\geq\xi^2> d^2(x_0,y)$, we can use the Hölder estimate \eqref{Holder} for the heat kernel. Therefore, applying Lemma \ref{aux lemma Q} for $\kappa=1+\sigma+\frac{\theta}{2}$, we get
	\begin{align*}
		I_2&\leq C(\sigma, \mathcal{M}) \,\xi^{\theta}\, t^{2\sigma} \int_{\xi^2}^{+\infty}V(x,\sqrt{u})^{-1}\,e^{-\frac{t^2}{4u}}\,e^{-c\frac{d^2(x,x_0)}{u}}\frac{\diff u}{u^{1+\sigma+\frac{\theta}{2}}}\\
		&\leq C(\xi, \sigma, \mathcal{M})\, \frac{1}{V(x, t+d(x,x_0))}\,\frac{t^{2\sigma}}{(t+d(x,x_0))^{2\sigma+\theta}}\\
		&\leq C(\xi, \sigma, \mathcal{M})\, t^{-\theta}\, Q_t^{\sigma}(x,x_0),
	\end{align*}
	where in the last step we used the bounds \eqref{kernelQ UL}.
	It remains to treat $I_1$. For this, observe that by the second part of Lemma \ref{aux lemma Q} for $\kappa=1+\sigma$ and by \eqref{kernelQ UL}, we have for all $x,y\in \mathcal{M}$ and for all $t$ large enough 
	\begin{align*}\frac{t^{2\sigma}}{2^{2\sigma}\Gamma(\sigma)}\int_{0}^{\xi^2}h_u(x,y)\,e^{-\frac{t^2}{4u}}\frac{ \diff u}{u^{1+\sigma}}\lesssim t^{-N}\, Q_t^{\sigma}(x,y), \quad \text{for all} \; N>0.
	\end{align*}
	Therefore, 
	$$I_1\lesssim t^{-N}\,(Q_t^{\sigma}(x,y)+Q_t^{\sigma}(x,x_0)).$$
	Taking $t$ large enough so that $d(x_0,y)<\xi<t$ and making use of Corollary \ref{Cor Q} we finally estimate 
	$$I_1\lesssim t^{-N}\,Q_t^{\sigma}(x,x_0) \qquad \forall N>0.$$
	Altogether,  we get
	\begin{align}\label{Holder Q}
		|Q_t^{\sigma}(x,y)-Q_t^{\sigma}(x,x_0)|\lesssim  t^{-\theta}\,Q_t^{\sigma}(x,x_0) 
	\end{align}
	for all  $x,y,x_0\in \mathcal{M}$ such that $d(x_0,y)<\xi$ and $t$ large enough. This proves (P4) for $\theta_1=\theta$, where $\theta$ is the constant from the heat kernel H\"older inequality \eqref{Holder}.

	\subsection{Asymptotics for solutions to the fractional heat equation} 
	
	As in the previous section, we start by proving upper and lower bounds for the kernel $P_t^{\alpha}$, by using the subordination formula \eqref{kernelFH} and the double-sided estimates of the heat kernel \eqref{LY}.
	
	It is well-known that the subordinator $\eta_{t}^{\alpha}$ cannot be written explicitly, except for the case $\alpha=1$. Let us recall, however, that
	\begin{align} \label{etaUL}
		\eta_{t}^{\alpha}(u) \asymp
		\begin{cases}
			t^{\frac{1}{2-\alpha}}\, u^{-\frac{4-\alpha}{4-2\alpha}}\,e^{-c_{\alpha} t^{\frac{2}{2-\alpha}} u^{-\frac{\alpha}{2-\alpha}}}, &u\leq t^{2/\alpha} \\[5pt]
			t\, u^{-1-\alpha/2}, &u> t^{2/\alpha},
		\end{cases}
	\end{align}
	where $c_{\alpha}=\frac{2-\alpha}{2}\left( \frac{\alpha}{2}\right)^{\frac{\alpha}{2-\alpha}}$, \cite{GS04}. From these, we get that 
	\begin{equation}\label{etaU}
		\eta_{t}^{\alpha}(u) \lesssim	t\, u^{-1-\alpha/2}, \quad t, u>0.
	\end{equation}
	
	\begin{lemma}\label{aux lemma P}
		For all $x,y\in \mathcal{M}$, all $t>0$, and all constants $C>0$, $\kappa\geq 0$, it holds
		$$\int_{0}^{\infty}V(y,\sqrt{u})^{-1}\,\,e^{-C\frac{d^2(x,y)}{u}} \,\eta^{\alpha}_{t}(u)\, \frac{\diff u}{u^{\kappa}}\asymp  \frac{t}{V(x, t^{\frac{1}{\alpha}}+d(x,y))}\,\frac{1}{ (t^{\frac{1}{\alpha}}+d(x,y))^{\alpha+2\kappa}}$$
		where the implied constants depend only on $C, \kappa, \alpha$ and $\mathcal{M}$.
		
		In addition, for fixed $r>0$, we have
		$$V(x, t^{\frac{1}{\alpha}}+d(x,y))\, (t^{\frac{1}{\alpha}}+d(x,y))^{\alpha+2\kappa}\int_{0}^{r}V(y,\sqrt{u})^{-1}\,e^{-C\frac{d^2(x,y)}{u}} \,\eta_t^{\alpha}(u)\, \frac{du}{u^{\kappa}}=\textrm{O}(t^{-N}) \quad \forall N>0$$
		as $t\rightarrow +\infty,$ where the implied constant depends in addition on $r$, $N$.
	\end{lemma}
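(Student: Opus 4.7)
My plan is to mirror the proof of Lemma \ref{aux lemma Q}, with the Gaussian factor $e^{-t^2/4u}$ there replaced by the subordinator density $\eta_t^\alpha(u)$ and with the characteristic scale $t + d(x,y)$ there replaced by $R := t^{1/\alpha} + d(x,y)$. The target expression $\frac{t}{V(x,R)\,R^{\alpha+2\kappa}}$ should emerge as the natural prefactor after the substitution $u = R^2/v$, and the remaining dimensionless $v$-integral will be shown to be a bounded positive quantity by means of the volume-doubling inequality \eqref{S2 Comp same center}.

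For the two-sided estimate I would split the domain at $u = t^{2/\alpha}$ according to the two regimes of $\eta_t^\alpha$ in \eqref{etaUL}. On the tail $\{u \geq t^{2/\alpha}\}$ the bound $\eta_t^\alpha(u) \asymp t\, u^{-1-\alpha/2}$ applies, and the substitution $u = R^2/v$ converts the integral into $t\,R^{-\alpha-2\kappa}\int_0^{R^2/t^{2/\alpha}} V(y,R/\sqrt{v})^{-1}\,e^{-Cvd^2(x,y)/R^2}\,v^{\alpha/2+\kappa-1}\,dv$. Splitting this $v$-integral at $v = 1$ and exploiting $d(x,y) \leq R$ together with \eqref{S2 Comp same center} reduces $V(y,R/\sqrt{v})^{-1}$ to $V(x,R)^{-1}$ times a polynomial correction in $v$ that is controlled by the combination of the exponential $e^{-Cvd^2/R^2}$ and the power $v^{\alpha/2+\kappa-1}$, exactly as in Lemma \ref{aux lemma Q}. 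On the complementary range $\{u \leq t^{2/\alpha}\}$, the super-exponential factor $\exp(-c_\alpha t^{2/(2-\alpha)}u^{-\alpha/(2-\alpha)})$ of the first regime of \eqref{etaUL} produces, after the same substitution, a contribution of at most the same order. The matching lower bound is obtained by restricting to $u \asymp R^2$, where $\eta_t^\alpha(u)$, $e^{-Cd^2/u}$, and $V(y,\sqrt{u})^{-1}$ are each bounded below by constant multiples of their natural sizes, and then using volume doubling.

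For the fast-decay assertion, I would take $t$ large enough that $r < t^{2/\alpha}$, so that $(0,r)$ sits entirely in the first regime of \eqref{etaUL}. Splitting the exponential via
$$\exp\!\Big(-c_\alpha t^{2/(2-\alpha)}u^{-\alpha/(2-\alpha)}\Big) \leq \exp\!\Big(-\tfrac{c_\alpha}{2}(t^2/r^\alpha)^{1/(2-\alpha)}\Big)\exp\!\Big(-\tfrac{c_\alpha}{2}t^{2/(2-\alpha)}u^{-\alpha/(2-\alpha)}\Big)$$
extracts a prefactor that decays faster than any polynomial in $t$, while the residual integral is dominated by the two-sided estimate already proved with a smaller exponential constant. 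Multiplying by $V(x,R)\,R^{\alpha+2\kappa}$ yields the claimed $\mathrm{O}(t^{-N})$.

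The main obstacle will be the small-$u$ regime of the two-sided estimate when $d(x,y) = 0$: the factor $e^{-Cd^2(x,y)/u}$ then offers no damping as $u \to 0^+$, and both integrability and the correct scaling must come entirely from the super-exponential decay of $\eta_t^\alpha$ near zero. This requires tracking the interplay of the exponents $(4-\alpha)/(4-2\alpha)$ and $\alpha/(2-\alpha)$ from \eqref{etaUL} against the volume-growth exponents $\nu,\nu'$ from \eqref{S2 Comp same center}, which is bookkeeping rather than a structural difficulty since the exponential dominates any polynomial after the substitution.
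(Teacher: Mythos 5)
Your route uses the same ingredients as the paper's proof---the subordinator bounds \eqref{etaUL}--\eqref{etaU}, volume doubling \eqref{S2 Comp same center}, a change of variables adapted to the scale $R=t^{1/\alpha}+d(x,y)$, and the super-exponential decay of $\eta_t^{\alpha}$ near $u=0$---but you organize it by the two regimes of the subordinator (splitting at $u=t^{2/\alpha}$), whereas the paper first splits on $d(x,y)\gtrless t^{1/\alpha}$ and only subdivides at $t^{2/\alpha}$ in the second case. Your unified lower bound (restricting to $u\asymp R^{2}$, which always lies in the second regime of \eqref{etaUL}) and your extraction of half of the super-exponential factor for the decay statement are in fact tidier than the paper's case-by-case treatment: the latter coincides with the paper's argument for $d(x,y)<t^{1/\alpha}$ and applies verbatim in the other case too.

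Two steps, however, are stated too loosely and, taken literally, would fail unless you reinstate the paper's distance dichotomy. (i) In the tail regime, after $u=R^{2}/v$ the only exponential left is $e^{-Cv\,d^{2}(x,y)/R^{2}}$; when $d(x,y)\ll t^{1/\alpha}$ (e.g.\ $d=0$) it gives no decay, and the doubling correction $v^{\nu/2}$ times $v^{\alpha/2+\kappa-1}$ is not integrable at infinity, so the control is \emph{not} ``exactly as in Lemma \ref{aux lemma Q}''; what saves you is that the upper limit $R^{2}/t^{2/\alpha}$ of your $v$-integral is then at most $4$, while for $d(x,y)\geq t^{1/\alpha}$ one has $d^{2}/R^{2}\geq 1/4$ and the exponential does the work. (ii) In the small-$u$ regime the super-exponential factor alone does \emph{not} suffice when $d(x,y)\geq t^{1/\alpha}$: discarding the Gaussian there yields a bound of order $t^{-2\kappa/\alpha}V(x,t^{1/\alpha})^{-1}$, which exceeds the target $t\,R^{-\alpha-2\kappa}V(x,R)^{-1}$ by the unbounded factor $\bigl(R/t^{1/\alpha}\bigr)^{\alpha+2\kappa}V(x,R)/V(x,t^{1/\alpha})$; you must keep $e^{-Cd^{2}(x,y)/u}$ to generate both $V(x,R)^{-1}$ and $R^{-\alpha-2\kappa}$, which is precisely why the paper treats the case $d(x,y)\geq t^{1/\alpha}$ with \eqref{etaU} plus the Gaussian on the whole half-line. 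The obstacle you single out ($d=0$, small $u$) is indeed resolved by the super-exponential decay as you say; it is the opposite extreme $d(x,y)\gg t^{1/\alpha}$ that your sketch glosses over. Both points are fixable bookkeeping, after which your argument goes through and is essentially the paper's.
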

	
	\begin{proof} Write 
		$$K=\int_{0}^{\infty}V(y,\sqrt{u})^{-1}\,e^{-C\frac{d^2(x,y)}{u}} \,\eta_t^{\alpha}(u)\, \frac{\diff u}{u^{\kappa}}.$$
		To estimate the integral $K$, we distinguish cases.
		
		\textit{Case I: $d(x,y)\geq t^{1/\alpha}$.} Then, $$\frac{1}{4}(t^{\frac{1}{\alpha}}+d(x,y))^2\leq d^2(x,y)\leq  (t^{\frac{1}{\alpha}}+d(x,y))^2.$$
		Therefore, by   \eqref{etaU}, we get that 
		\begin{align*}
			K&\lesssim t \int_{0}^{\infty}u^{-1-\frac{\alpha}{2}-\kappa} \, V(x, \sqrt{u})^{-1} \exp\left\{{-\frac{C}{4}\, \left(\frac{t^{\frac{1}{\alpha}}+d(x,y)}{\sqrt{u}}\right)^2}\right\}\,\diff{u}\\
			&\lesssim \frac{t}{V(x, t^{\frac{1}{\alpha}}+d(x,y))} \int_{0}^{\infty}u^{-1-\frac{\alpha}{2}-\kappa} \,  \exp\left\{-\frac{C}{8}\, \left(\frac{t^{\frac{1}{\alpha}}+d(x,y)}{\sqrt{u}}\right)^2\right\}\,\diff{u}\\
			&\lesssim \frac{t}{V(x, t^{\frac{1}{\alpha}}+d(x,y))}\,\frac{1}{ (t^{\frac{1}{\alpha}}+d(x,y))^{\alpha+2\kappa}}.
		\end{align*}  
		Here, for the second inequality we used the volume doubling property \eqref{S2 Comp same center} and the fact that $x^{\mu}\,e^{-c\,x^2}\leq \textrm{const.}(c,\mu)\, e^{-\frac{c}{2}\,x^2}$ for all $c, \mu>0$ and $x\in (0,+\infty)$. For the last inequality we applied a change of variables. 
		
		For a lower bound, by \eqref{etaUL} we get for all $C>0$ that 
		\begin{equation*}
			K\geq\int_{t^{2/\alpha}}^{\infty}V(y,\sqrt{u})^{-1}\,e^{-C\frac{d^2(x,y)}{u}} \,\eta_t^{\alpha}(u)\, \frac{\diff u}{u^{\kappa}} \gtrsim t \int_{(t^{\frac{1}{\alpha}}+d(x,y))^{2}}^{2\,(t^{\frac{1}{\alpha}}+d(x,y))^{2}}u^{-1-\frac{\alpha}{2}-\kappa} \, V(x, \sqrt{u})^{-1}\,e^{-C\frac{d^2(x,y)}{u}}\diff{u}.
		\end{equation*} 
		Taking into account that $u\asymp (t^{\frac{1}{\alpha}}+d(x,y))^{2} \asymp d^2(x,y)$, the claim follows immediately by the volume doubling property.

		\textit{Case II: $d(x,y)< t^{1/\alpha}$.} Then, $t^{\frac{2}{\alpha}}\asymp t^{\frac{2}{\alpha}}+d^2(x,y) \asymp (t^{\frac{1}{\alpha}}+d(x,y))^2,$ so 
		\begin{equation}\label{sim}
			\frac{t}{(t^{\frac{1}{\alpha}}+d(x,y))^{\alpha}}\asymp 1.
		\end{equation}
		Write 
		$$I+J:=\int_0^{t^{
				2/\alpha}}V(y,\sqrt{u})^{-1}\,e^{-C\frac{d^2(x,y)}{u}} \,\eta_t^{\alpha}(u)\, \frac{\diff u}{u^{\kappa}}+ \int_{t^{
				2/\alpha}}^{\infty}V(y,\sqrt{u})^{-1}\,e^{-C\frac{d^2(x,y)}{u}} \,\eta_t^{\alpha}(u)\, \frac{\diff u}{u^{\kappa}}.$$
		
		We first deal with $I$. By \eqref{etaUL}, we have
		\begin{align*}
			I&\asymp	t^{\frac{1}{2-\alpha}}	\int_{0}^{
				t^{2/\alpha}} u^{-\frac{4-\alpha}{4-2\alpha}-\kappa}\,e^{-c_{\alpha}\left(\frac{t^{2/\alpha}}{u}\right)^{\frac{\alpha}{2-\alpha}}}\;V(x, \sqrt{u})^{-1}\;e^{-C\,\frac{d^2(x,y)}{u}}\;\diff{u} \\
			&\asymp t^{-2\kappa/\alpha}	\int_{1}^{\infty} u^{\frac{3\alpha-4}{4-2\alpha}+\kappa}\,e^{-c_{\alpha} u^{\frac{\alpha}{2-\alpha}}}\;V\left(x, \frac{t^{1/\alpha}}{\sqrt{u}}\right)^{-1}\;e^{-C\,\frac{d^2(x,y)}{t^{2/\alpha}}u}\;\diff{u}\\
			&\lesssim t^{-2\kappa/\alpha}\,V(x, t^{1/\alpha})^{-1}\int_{1}^{\infty} u^{\frac{3\alpha-4}{4-2\alpha}+\kappa}\,e^{-c_{\alpha} u^{\frac{\alpha}{2-\alpha}}}\;u^{\frac{\nu'}{2}}\, \diff{u}  \\
			&\lesssim \frac{t}{V(x, t^{\frac{1}{\alpha}}+d(x,y))}\,\frac{1}{   (t^{\frac{1}{\alpha}}+d(x,y))^{\alpha+2\kappa}},
		\end{align*}
		where we first performed a change of variables, while for the last two inequalities we used \eqref{S2 Comp same center} and \eqref{sim}. The lower bound for $I$ is proved similarly, using that $\exp\left\{-C\,\frac{d^2(x,y)}{t^{2/\alpha}}u\right\}>\exp\left\{-C\,u\right\}$.

		We now turn to $J$. We restrict to proving upper bounds, since the proof for lower bounds runs similarly. Observe first that for any $C>0$,  we have 
		\begin{equation}\label{GUL}
			\exp\left\{-C\,\frac{d^2(x,y)}{t^{2/\alpha}}u\right\}\asymp 1, \quad \text{ for } \;  d(x,y)<t^{1/\alpha}, \; u\in (0,1).
		\end{equation}
		Then, by \eqref{etaUL}, the change of variables $u\mapsto t^{2/\alpha}/u$, and \eqref{GUL}  we get
		\begin{align*}
			J&\asymp	
			t	\int_{t^{2/\alpha}}^{\infty} u^{-1-\frac{\alpha}{2}-\kappa}  \;V(x, \sqrt{u})^{-1}\;e^{-C\,\frac{d^2(x,y)}{u}}\;\diff{u} \asymp	\int_{0}^{1} u^{-1+\frac{\alpha}{2}+\kappa}   \;V\left(x, \frac{t^{1/\alpha}}{\sqrt{u}}\right)^{-1}\;\diff{u}\\
			&\lesssim t^{-2\kappa/\alpha}\, V(x, t^{1/\alpha})^{-1}\int_{0}^{1} u^{-1+\frac{\alpha}{2}+\kappa} \;u^{\frac{\nu}{2} }\;\diff{u}\\
			&\lesssim\frac{t}{V(x, t^{\frac{1}{\alpha}}+d(x,y))}\,\frac{1}{ (t^{\frac{1}{\alpha}}+d(x,y))^{\alpha+2\kappa}},
		\end{align*}
		using the volume doubling property \eqref{S2 Comp same center}. This concludes the proof as far as bounds are concerned. 
		
		Last, for asymptotics, we distinguish cases once again. If $d(x,y)\geq t^{1/\alpha}$, the claim follows by using \eqref{etaU}, by observing that for all $C>0$, 
		\begin{align*}
			\int_{0}^{r}V(y,\sqrt{u})^{-1}e^{-C\frac{d^2(x,y)}{u}} \eta_t^{\alpha}(u) \frac{\diff u}{u^{\kappa}}
			&\lesssim \exp\left\{-\frac{C}{8} \left(\frac{t^{\frac{1}{\alpha}}+d(x,y)}{\sqrt{r}}\right)^2\right\}\times\\ & \times t \int_{0}^{\infty}u^{-1-\frac{\alpha}{2}-\kappa}  V(x, \sqrt{u})^{-1} \exp\left\{{-\frac{C}{8} \left(\frac{t^{\frac{1}{\alpha}}+d(x,y)}{\sqrt{u}}\right)^2}\right\}\,\diff{u}
		\end{align*}  
		and by resuming the computations for $K$. Therefore, the exponential term allows for the claimed fast decay in time. If $d(x,y)<t^{1/\alpha}$, take $t$ large enough so that $r<t^{2/\alpha}$ whence the claim follows by \eqref{etaUL}, by resuming the computations for $I$ and  by observing that for all $C>0$,
		\begin{align*}
			\int_{0}^{r}V(y,\sqrt{u})^{-1}\,\,e^{-C\frac{d^2(x,y)}{u}} \,\eta_t^{\alpha}(u)\, \frac{\diff u}{u^{\kappa}}
			&\lesssim \exp\left\{-\frac{c_{\alpha}}{2} \left(\frac{t^{2/\alpha}}{r}\right)^{\frac{\alpha}{2-\alpha}}\right\} \times \\
			&\times	t^{\frac{1}{2-\alpha}}	\int_{0}^{
				r} u^{-\frac{4-\alpha}{4-2\alpha}-\kappa}\,e^{-\frac{c_{\alpha}}{2} \left(\frac{t^{2/\alpha}}{u}\right)^{\frac{\alpha}{2-\alpha}}}V(x, \sqrt{u})^{-1}e^{-C\,\frac{d^2(x,y)}{u}}\diff{u}.
		\end{align*}
		The proof is now complete.
	\end{proof}
	
	\textbf{Remark.} For $\kappa=0$, the estimates of Lemma \ref{aux lemma P} amount to bounds for the kernel of the fractional heat semigroup (see the next corollary), which were already known in the literature: we refer to \cite[Eq (3.5)]{Shi17} where such bounds are actually established on a more general setting. It is mostly the case $\kappa>0$ that will be of interest to us.
	
	\begin{corollary}\label{Cor P}
		There is a constant $C\geq 1$ such that if $d(y,z) \leq t^{1/\alpha}$, then
		$$C^{-1} \leq
		\frac{P^{\alpha}_t(x,y)}{P^{\alpha}_t(x,z)}\leq C.$$
	\end{corollary}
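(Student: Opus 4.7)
The plan is to mimic the proof of Corollary \ref{Cor Q}, substituting the fractional heat kernel estimates for those of the Caffarelli--Silvestre kernel. First I would combine the subordination formula \eqref{kernelFH} with the two-sided Li--Yau bounds \eqref{LY} on the heat kernel; this brings the problem into the exact form handled by Lemma \ref{aux lemma P}. Applying that lemma with $\kappa=0$ yields the two-sided estimate
\begin{equation*}
P_t^{\alpha}(x,y)\,\asymp\,\frac{t}{V(x,\,t^{1/\alpha}+d(x,y))}\,\frac{1}{(t^{1/\alpha}+d(x,y))^{\alpha}},
\end{equation*}
with implied constants independent of $x,y,t$.

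Next, assuming $d(y,z)\le t^{1/\alpha}$, I would invoke the elementary triangle-inequality observation \eqref{ineqTR} with $\gamma=\alpha$, which gives
\begin{equation*}
\tfrac{1}{2}\,\le\,\frac{t^{1/\alpha}+d(x,y)}{t^{1/\alpha}+d(x,z)}\,\le\,2.
\end{equation*}
This immediately controls the factor $(t^{1/\alpha}+d(x,\cdot))^{\alpha}$ up to a constant. To control the volume factor $V(x,\,t^{1/\alpha}+d(x,\cdot))$, I would apply the doubling property \eqref{S2 Comp same center}: the ratio of the radii $t^{1/\alpha}+d(x,y)$ and $t^{1/\alpha}+d(x,z)$ lies in $[\tfrac{1}{2},2]$, hence the corresponding volumes are comparable with a constant depending only on $\mathcal{M}$. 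Combining these two comparisons with the above two-sided estimate yields the claim.

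No genuine obstacle arises: the delicate part, i.e.\ extracting sharp two-sided bounds on $P_t^{\alpha}$ despite the lack of a closed form for the subordinator $\eta_t^{\alpha}$, has already been absorbed into Lemma \ref{aux lemma P}. Once those bounds are in hand, the argument is a routine triangle-inequality plus doubling comparison, structurally identical to the Caffarelli--Silvestre case. The only point to check carefully is that the constant $C$ produced does not depend on $x,y,z$ or $t$, which follows since the doubling and volume-ratio constants in \eqref{S2 Comp same center} are uniform and the implied constants in Lemma \ref{aux lemma P} depend only on $\alpha$, $\kappa=0$, and the manifold $\mathcal{M}$.
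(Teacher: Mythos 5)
Your proposal is correct and follows essentially the same route as the paper: Lemma \ref{aux lemma P} with $\kappa=0$ combined with the subordination formula \eqref{kernelFH} and the Li--Yau bounds \eqref{LY} gives the two-sided estimate \eqref{kernelP UL}, after which the triangle-inequality observation \eqref{ineqTR} (with $\gamma=\alpha$) and the doubling property \eqref{S2 Comp same center} yield the comparability, exactly as the paper does by reducing to the argument of Corollary \ref{Cor Q}.
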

	\begin{proof}
		Recall first the subordination formula \eqref{kernelFH} and the double-sided heat kernel estimates \eqref{LY}. Then, by Lemma \ref{aux lemma P} for $\kappa=0$ we get for all $x, y\in \mathcal{M}$ and all $t>0$ that
		\begin{equation}\label{kernelP UL}
			P_t^{\alpha}(x,y)\asymp \frac{1}{V(x, t^{\frac{1}{\alpha}}+d(x,y))} \,\frac{t}{ (t^{\frac{1}{\alpha}}+d(x,y))^{\alpha}}
		\end{equation}
		Then the claim follows as in Corollary \ref{Cor Q}.
	\end{proof}
	We are now in a position to prove that 
	$$(P_t^{\alpha})\in \mathcal{P}_{\alpha}, \quad \alpha\in (0,2).$$ Indeed, as already mentioned, due to the subordination formula \eqref{kernelFH}, the kernel $P_t^{\alpha}$ satisfies (P1) as well as the first assertion of (P2). The second assertion of (P2) follows immediately from \eqref{kernelP UL}. (P3) follows by Corollary \ref{Cor P}. 
	
	Finally, we prove (P4). Fix a basepoint $x_0\in \mathcal{M}$ and consider $y\in \mathcal{M}$ such that $d(x_0,y)< \xi$. Write
	\begin{align*}
		|P_t^{\alpha}(x,y)-P_t^{\alpha}(x,x_0)|&\leq \int_{0}^{+\infty}|h_u(x,y)-h_u(x,x_0)|\,\eta_{t}^{\alpha}(u)\,\diff u\\
		&:=I_1+I_2, 
	\end{align*}
	where 
	\begin{align*}
		I_1&=\int_{0}^{\xi^2}|h_u(x,y)-h_u(x,x_0)|\,\eta_{t}^{\alpha}(u)\,\diff u, \\ I_2&=\int_{\xi^2}^{+\infty}|h_u(x,y)-h_u(x,x_0)|\,\eta_{t}^{\alpha}(u)\,\diff u.
	\end{align*}
	Let us first start with $I_2$. Since $u\geq\xi^2> d^2(x_0,y)$, we can use the Hölder estimate \eqref{Holder} for the heat kernel. Therefore, applying Lemma \ref{aux lemma P} for $\kappa=\theta/2$, we get
	\begin{align*}
		I_2&\leq C(\alpha, \mathcal{M}) \,\xi^{\theta}\,\int_{0}^{\infty}V(y,\sqrt{u})^{-1}\,\,e^{-c\frac{d^2(x,y)}{u}} \,\eta^{\alpha}_{t}(u)\, \frac{\diff u}{u^{\theta/2}}\\
		&\leq C(\xi, \alpha, \mathcal{M}) \, \frac{t}{V(x, t^{\frac{1}{\alpha}}+d(x,y))}\,\frac{1}{ (t^{\frac{1}{\alpha}}+d(x,y))^{\alpha+\theta}} \\
		&\leq C(\xi, \alpha, \mathcal{M}) \, t^{-\theta/ \alpha}\, P_t^{\alpha}(x,x_0),
	\end{align*}
	where in the last step we used the bounds \eqref{kernelP UL}.
	It remains to treat $I_1$. For this, observe that by the second part of Lemma \ref{aux lemma P} for $\kappa=0$ and by \eqref{kernelP UL}, we have for all $x,y\in \mathcal{M}$ and for all $t$ large enough,
	\begin{align*}\int_{0}^{\xi^2}h_u(x,y)\,\eta^{\alpha}_{t}(u)\, \diff u\lesssim t^{-N}\, P_t^{\alpha}(x,y), \quad \text{for all} \quad N>0.
	\end{align*}
	Therefore, 
	$$I_1\lesssim t^{-N}\,(P_t^{\alpha}(x,y)+P_t^{\alpha}(x,x_0)).$$
	Taking $t$ large enough so that $d(x_0,y)<\xi<t^{1/\alpha}$ and making use of Corollary \ref{Cor P} we finally estimate 
	$$I_1\lesssim t^{-N}\,P_t^{\alpha}(x,x_0) \qquad \forall N>0, \, \forall t>\xi^{\alpha}.$$
	Altogether,  we get
	\begin{align}\label{Holder P}
		|P_t^{\alpha}(x,y)-P_t^{\alpha}(x,x_0)|\lesssim  t^{-\theta/\alpha}\,P_t^{\alpha}(x,x_0) 
	\end{align}
	for all $t$ large enough and all $x,y,x_0\in \mathcal{M}$ such that $d(x_0,y)<\xi$. This proves (P4) for $\theta_{\alpha}=\theta/\alpha$, where $\theta$ is the constant from the heat kernel H\"older inequality \eqref{Holder}.

	\vspace*{0.5cm}

	\subsubsection{Final remarks on the rate of convergence.} The rate of convergence for continuous and compactly supported initial data is optimal, both for the extension problem ($\textrm{O}(t^{-\theta})$ from \eqref{Holder Q}) and for the fractional heat equation ($\textrm{O}(t^{-\theta/\alpha})$ from \eqref{Holder P}), in the following sense: on euclidean space, the heat kernel H{\"o}lder inequality \eqref{Holder} holds for $\theta=1$, and it is known that, concerning the fractional heat equation, the optimal rate for convergence in the $L^1$ norm for compactly supported initial data is $\textrm{O}(t^{-1/\alpha})$,  \cite[Theorem 3.2]{Vaz2018}.

	Moreover, instead of using (P4) for $P_t^{\alpha}$, one could pursue using a H{\"older} continuity estimate for $P^{\alpha}_t$, that is, that there is a constant $\Theta>0$ such that
	$$|P_t^{\alpha} (x, y) - P_t^{\alpha} (x, z)| \lesssim \left( \frac{d(y,z)}{t^{1/\alpha}}\right)^{\Theta}P_t^{\alpha} (x, y),$$
	when $d(y, z) \leq t^{1/\alpha}$. For a proof, see \cite[Theorem 4.14]{CK03} for stable-like processes on a rather general setting (alternatively, one can modify for all $\alpha \in (0,2)$ the result of \cite[Theorem 4]{DzPr2018} for the Poisson operator, using the semigroup property, the fact that the fractional heat kernel is a probability measure and Corollary \ref{Cor P}). However, for compactly supported initial data, this would imply $L^1(\mathcal{M})$ convergence at speed  $\textrm{O}(t^{-\Theta/\alpha})$. In this sense, our approach gives more information on the rate of convergence for this class of data. Notice moreover that as far as the extension problem is concerned, for $\sigma\neq 1/2$, the operators $\{T_{t}^{\sigma}\}_{t>0}$ do not form a semigroup. Indeed, observe that 
	$$\frac{t^{2\sigma}}{2^{2\sigma}\Gamma(\sigma)}\int_{0}^{+\infty}e^{-u
		\lambda^2}\,e^{-\frac{t^2}{4u}}\frac{du}{u^{1+\sigma}}=\frac{t^{\sigma}}{2^{\sigma-1}\Gamma(\sigma)}\,K_{\sigma}(t\lambda)\lambda^{\sigma}, \quad \lambda >0,\quad \sigma \in (0,1), $$
	where $K_{\sigma}(\cdot)$ is the modified Bessel function of the second kind and index $\sigma$ (recall that for $\sigma=1/2$, we have $K_{1/2}(x)=\sqrt{\frac{\pi}{2x}}e^{-x}$, so the right hand side above becomes $e^{-t\lambda}$).
	
	Finally, following some ideas from the euclidean setting in \cite{Vaz2017, Vaz2018}, let us show how one can prescribe any rate of convergence to solutions of the fractional heat equation by choosing appropriate initial data (the proof works also for $T_t^{\sigma}$, with obvious modifications).
	More precisely, we shall show that given any
	decreasing and positive function $\phi(t)$ such that $\phi(t)\rightarrow 0$ as $t\rightarrow+\infty$, there is a solution $w$ with mass $M = 1$ satisfying
	\begin{equation}\label{norate}
		\,\Vert \left\vert w(t,\,.\,)-P_t^{\alpha}(\,.\,,x_{0})\right\vert V(\,.\,,
		t^{1/\alpha})\Vert _{L^{\infty }(\mathcal{M})} \gtrsim k\phi(t_k),
	\end{equation}
	for a sequence of times $t_k\rightarrow +\infty$ that can be chosen.

	To prove \eqref{norate}, fix a basepoint $x_0\in \mathcal{M}$. Let $(m_k)_{k\geq 1}$ be a nonnegative summable sequence with $\sum_{k=1}^{\infty}m_k=\epsilon<1$, and consider initial data $(1-\epsilon)\,\delta_{x_0}(x)+\sum_{k=1}^{\infty}m_k \,\delta_{x_k}(x)$. Observe that the total mass is $1$. Also, the points $x_k\in \mathcal{M}$, $k\geq 1$, where the weighted Dirac measures are located are such that $r_k:=d(x_0,x_k)\rightarrow +\infty$.  In this case, the action of $W_t^{\alpha}$ yields the following solution of the fractional heat equation,
	\begin{align*}
		w(t,x)=(1-\epsilon)\,P_t^{\alpha}(x,x_0)+\sum_{k=1}^{\infty}m_k\,P_t^{\alpha}(x,x_k).
	\end{align*}
	Therefore at $x=x_0$ we have
	\begin{align*}
		|w(t,x_0)-P_t^{\alpha}(x_0,x_0)|&=\left| \sum_{k=1}^{\infty}m_k\, (P_t^{\alpha}(x_0,x_k)-P_t^{\alpha}(x_0,x_0)) \right|\\
		&=P_t^{\alpha}(x_0,x_0) \left| \sum_{k=1}^{\infty}m_k\, \left( \frac{P_t^{\alpha}(x_0,x_k)}{P_t^{\alpha}(x_0,x_0)}-1\right) \right|\\
		&\geq c_1\, V(x_0, t^{1/\alpha})^{-1}\, \left| \sum_{k=1}^{\infty}m_k\, \left( \frac{P_t^{\alpha}(x_0,x_k)}{P_t^{\alpha}(x_0,x_0)}-1\right) \right|
	\end{align*}	
	for some constant $c_1>0$ due to \eqref{kernelP UL}. Now, again due to \eqref{kernelP UL}, there is a constant $C_2\geq 1$ such that 
	\begin{align*}
		C_2^{-1}\,\frac{V(x_0, t^{1/\alpha })}{V(x_0, t^{1/\alpha}+r_k)} \,\frac{t}{ (t^{1/\alpha}+r_k)^{\alpha}}\leq \frac{P_t^{\alpha}(x_0,x_k)}{P_t^{\alpha}(x_0,x_0)}\leq C_2\,\frac{V(x_0, t^{1/\alpha})}{V(x_0, t^{1/\alpha}+r_k)} \,\frac{t}{ (t^{1/\alpha}+r_k)^{\alpha}},
	\end{align*}	
	where
	$$\frac{V(x_0, t^{1/\alpha})}{V(x_0, t^{1/\alpha}+r_k)}\leq C\, \left( 1+\frac{r_k}{t^{1/\alpha} } \right)^{-\nu'}, \quad \frac{t}{ (t^{1/\alpha}+r_k)^{\alpha}}= \left( 1+\frac{r_k}{t^{1/\alpha} } \right)^{-\alpha}$$
	for some $C>1$ owing to \eqref{S2 Comp same center}.  
	Consider now $\phi(t) \searrow 0$ as $t\rightarrow +\infty$, and choose iteratively $t_k$ and $x_k$ as follows:
	given choices for the steps $1, 2, . . . , k-1$, pick $t_k$ to be much larger than $t_{k-1}$ and such that
	$\phi(t_k) \leq  m_k/(2k)$. This is possible since $\phi(t)$ decreases to zero. Choose now $d(x_k,x_0) = r_k$ so large that $\left( 1+\frac{r_k}{t_k^{1/\alpha} } \right)^{-\nu'-\alpha}<1/(2\,C\,C_2)$. Therefore
	\begin{align*}
		|w(t,x_0)-P_t^{\alpha}(x_0,x_0)|\,V(x_0, t^{1/\alpha})\geq c_1\,k\phi(t_k),
	\end{align*}
	which proves the claim.

	\section{The Poisson semigroup on rank one non-compact symmetric spaces}
	This section deals with the Poisson semigroup convergence on rank one non-compact symmetric spaces. Our aim is to show that in this case, non-euclidean phenomena occur, namely, the convergence results of the previous sections fail. More precisely, our aim is to prove Theorem \ref{Main Poisson}.

	The Poisson semigroup $e^{-t\sqrt{-\Delta}}$ has
	been studied in various settings, including hyperbolic space  (and more generally, non-compact symmetric spaces), see for instance \cite{AJ99, CowGM} and the references therein. Information for its kernel $p_t$ can be deduced by its subordination to the heat kernel. Therefore, in the rank one case, by well-known properties of the heat kernel, the Poisson kernel $p_t$ is a radial positive function, and for $f\in L^1(\mathbb{X})$, we may write
	$$e^{-t\sqrt{-\Delta}}f(x)=e^{-t\sqrt{-\Delta}}f(gK)=\int_{\mathbb{X}}p_t(d(x,y))f(y)\,\diff{\mu(y)}=\int_{G}p_t(h^{-1}g)f(h)\,\diff{h}.$$

	We next recall some results about its large time behavior, \cite[Theorems 4.3.1 and 5.3.1]{AJ99}. Notice the exponential decay in time and space, which demonstrates the effect of geometry.
	
	\begin{theorem}\cite{AJ99} The Poisson kernel on $\mathbb{X}$ satisfies 
		\begin{equation}\label{bounds}
			p_t(r)\asymp t\, (1+r)\, (t^2+r^2)^{-\frac{5}{4}}\,e^{-\rho r-\rho \sqrt{t^2+r^2}}, \quad \text{ for $t$ large}.
		\end{equation}
		In addition, we have
		\begin{equation}\label{asymp}
			p_t(r)\sim 2^{m_{2\alpha}}\pi^{-\frac{n}{2}}\rho^{\frac{3}{2}}\, \gamma\left(\rho\frac{r}{\sqrt{t^2+r^2}}\right)t\, r\, (t^2+r^2)^{-\frac{5}{4}}\,e^{-\rho r-\rho \sqrt{t^2+r^2}}, \quad \text{ as } t\rightarrow +\infty,
		\end{equation}
		where  $$\gamma(s)=\frac{\Gamma(s+\frac{m_{\alpha}}{2})\Gamma(\frac{s}{2}+\frac{\rho}{2})}{\Gamma(s+1)\Gamma(\frac{s}{2}+\frac{m_{\alpha}}{4})}, \quad s\geq 0.$$
	\end{theorem}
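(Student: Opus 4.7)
The plan is to derive both estimates from a subordination formula and a saddle-point analysis. Setting $\alpha=1$ in \eqref{kernelFH}, the Poisson kernel on $\mathbb{X}$ admits the representation
\begin{equation*}
p_t(r)\,=\,\frac{t}{2\sqrt{\pi}}\int_{0}^{\infty}h_u(r)\,u^{-3/2}\,e^{-t^2/(4u)}\,\diff u,
\end{equation*}
where $h_u(r)$ is the heat kernel on $\mathbb{X}$ written in the polar variable. On a rank one non-compact symmetric space, sharp long-time heat kernel asymptotics (Anker--Ostellari, building on Anker--Ji) give
\begin{equation*}
h_u(r)\,\asymp\, u^{-3/2}(1+r)(1+u+r)^{\frac{n-3}{2}}e^{-\rho^2 u-\rho r-r^2/(4u)}.
\end{equation*}
Substituting into the subordination formula and pulling the spatial factors outside reduces the matter, up to polynomial factors, to analyzing the scalar integral
\begin{equation*}
J(t,r)\,=\,\int_0^{\infty}u^{-3}(1+u+r)^{\frac{n-3}{2}}e^{-\Phi_{t,r}(u)}\,\diff u,\qquad \Phi_{t,r}(u)=\rho^2 u+\frac{t^2+r^2}{4u}.
\end{equation*}

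The key observation is that the phase $\Phi_{t,r}$ is minimized at $u_{\star}=\sqrt{t^2+r^2}/(2\rho)$, with $\Phi_{t,r}(u_{\star})=\rho\sqrt{t^2+r^2}$ and $\Phi_{t,r}''(u_{\star})=2\rho^3/\sqrt{t^2+r^2}$. A Laplace expansion of $J(t,r)$ around $u_{\star}$ produces a Gaussian factor of order $(t^2+r^2)^{1/4}$; multiplying back by $te^{-\rho r}(1+r)$ and collecting the polynomial powers at the saddle yields the two-sided estimate \eqref{bounds}. A clean execution requires separating the regimes $r\lesssim t$ and $r\gg t$, in which the slowly varying factor $(1+u_{\star}+r)^{(n-3)/2}$ simplifies differently, and verifying that both regimes glue to the common form $t(1+r)(t^2+r^2)^{-5/4}e^{-\rho r-\rho\sqrt{t^2+r^2}}$.

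For the sharp asymptotic \eqref{asymp}, the same scheme has to be carried out with all constants tracked exactly, and the spherically dependent amplitude $\gamma(s)$ emerges naturally. The cleanest route would be to bypass subordination and work directly with the inverse spherical transform
\begin{equation*}
p_t(r)\,=\,\mathrm{const.}\int_{\mathbb{R}}e^{-t\sqrt{\lambda^2+\rho^2}}\,\varphi_\lambda(\exp rH_0)\,|\mathbf{c}(\lambda)|^{-2}\,\diff\lambda,
\end{equation*}
deform the contour, and apply stationary phase in $\lambda$. The saddle sits at $\lambda_{\star}=-i\rho r/\sqrt{t^2+r^2}$, so that $s=\rho r/\sqrt{t^2+r^2}\in[0,\rho]$ enters the amplitude precisely through the ratio of gamma functions present in $|\mathbf{c}(\lambda_{\star})|^{-2}$; this is the mechanism producing $\gamma(s)$. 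The main obstacle is the bookkeeping: one must combine the Harish-Chandra series expansion of $\varphi_\lambda$, the explicit gamma factors in $\mathbf{c}$, the Plancherel density, and the $2^{m_{2\alpha}}\pi^{-n/2}\rho^{3/2}$ normalization coming from \eqref{vol}--\eqref{dens}, while controlling error terms uniformly in $r$ as $t\to\infty$. This program is carried out in \cite[Theorems 4.3.1 and 5.3.1]{AJ99}.
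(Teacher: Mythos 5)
The paper does not actually prove this statement: it is quoted from \cite{AJ99} (Theorems 4.3.1 and 5.3.1), so there is no internal argument to compare yours against, and your proposal must be judged on its own. For the two-sided bound \eqref{bounds}, your subordination-plus-Laplace sketch is a sound route: the saddle $u_\star=\sqrt{t^2+r^2}/(2\rho)$, the phase value $\Phi_{t,r}(u_\star)=\rho\sqrt{t^2+r^2}$ and the Gaussian width of order $(t^2+r^2)^{1/4}$ do recombine with the factor $t(1+r)e^{-\rho r}$ into $t(1+r)(t^2+r^2)^{-5/4}e^{-\rho r-\rho\sqrt{t^2+r^2}}$, uniformly in $r\ge 0$, because the saddle lies in the region $u\asymp t+r\gg 1$ where the amplitude is slowly varying, while the small-$u$ range is annihilated by $e^{-t^2/(4u)}$. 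This is close in spirit to the paper's own subordination arguments for $Q_t^{\sigma}$ and $P_t^{\alpha}$ (Lemmas \ref{aux lemma Q} and \ref{aux lemma P}), transplanted to the rank one setting with the sharp symmetric-space heat bounds in place of Li--Yau. Two slips should be fixed: the global Anker--Ji/Anker--Ostellari estimate has prefactor $u^{-n/2}$, namely $h_u(r)\asymp u^{-n/2}(1+r)(1+u+r)^{\frac{n-3}{2}}e^{-\rho^2u-\rho r-r^2/(4u)}$; your $u^{-3/2}$ version is only its reduction in the regime $u\gtrsim 1+r$ (which is the regime that matters at the saddle, so your conclusion survives, but the bound as stated is false as $u\to0$). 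Also $\Phi_{t,r}''(u_\star)=4\rho^3/\sqrt{t^2+r^2}$, not $2\rho^3/\sqrt{t^2+r^2}$; this is immaterial for $\asymp$ bounds but would matter the moment constants are tracked.

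For the sharp asymptotics \eqref{asymp}, by contrast, your text is a program rather than a proof: the exact constant $2^{m_{2\alpha}}\pi^{-n/2}\rho^{3/2}$ and the amplitude $\gamma\left(\rho r/\sqrt{t^2+r^2}\right)$ cannot be extracted from two-sided heat bounds, and your stationary-phase outline (contour shift to the saddle $\lambda_\star=\pm i\rho r/\sqrt{t^2+r^2}$, Plancherel density, Harish-Chandra expansion of $\varphi_\lambda$) ends by citing the very theorems being proved. Since the paper itself simply invokes \cite{AJ99} for exactly this, that is an acceptable resolution, but it should be flagged honestly as a citation of the second half of the statement rather than as part of your argument.
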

	
	Recall that the Poisson kernel has total mass $1$. We next determine its \textit{critical region}, that is, a region $\Omega_t\subseteq \mathbb{X}$ such that 
	$$\int_{\mathbb{X}\smallsetminus \Omega_{t}}p_t(x)\,\diff{\mu}(x)\longrightarrow 0 \quad \text{as} \quad t\rightarrow +\infty$$
	or equivalently,  
	$$\int_{\Omega_{t}}p_t(x)\,\diff{\mu}(x)\longrightarrow 1 \quad \text{as} \quad t\rightarrow +\infty.$$
	This notion will be substantial for our purposes.
	
	\begin{proposition}\label{Poisson critical}
		Let $0<\epsilon<2$. Then the critical region for the Poisson kernel on $\mathbb{X}$ is 
		$$\Omega_t=\{x\in \mathbb{X}:\; t^{2-\epsilon}\leq d(x,o) \leq t^{2+\epsilon}\},$$
		for $t$ large. More precisely, 
		$$	\int_{\mathbb{X}\,\smallsetminus\,\Omega_{t}}\,p_t(x)\,\diff{\mu}(x)=\textrm{O}(t^{-\frac{\epsilon}{2}}).$$
	\end{proposition}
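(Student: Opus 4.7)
The plan is to pass to the Cartan decomposition \eqref{vol}: since $p_t$ is radial, the integral to be estimated reduces, up to a positive constant, to
\begin{equation*}
  \int_{[0,\,t^{2-\epsilon})\,\cup\,(t^{2+\epsilon},\,\infty)} p_t(r)\,\delta(r)\,\diff r.
\end{equation*}
Combining the upper bound of \eqref{bounds} (valid for $t$ large) with $\delta(r)\lesssim e^{2\rho r}$ from \eqref{dens} gives the pointwise bound
\begin{equation*}
  p_t(r)\,\delta(r) \,\lesssim\, t\,(1+r)\,(t^2+r^2)^{-5/4}\,\exp\!\bigl\{-\rho\bigl(\sqrt{t^2+r^2}-r\bigr)\bigr\}.
\end{equation*}
The key identity $\sqrt{t^2+r^2}-r = t^2/(r+\sqrt{t^2+r^2})$ makes the dependence of the exponent on $t^2/r$ transparent and, in particular, shows that the exponent is always nonpositive.

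Next, I would split the complement of $\Omega_t$ into the small-radii tail $\{r<t^{2-\epsilon}\}$ and the large-radii tail $\{r>t^{2+\epsilon}\}$ and treat each separately. For the small-radii tail, the identity above yields $\sqrt{t^2+r^2}-r\gtrsim t^{\min(\epsilon,1)}$ uniformly for $r\le t^{2-\epsilon}$ (handle the cases $\epsilon\le 1$ and $\epsilon>1$ separately, using in the latter case that $r\le t$). Hence the exponential factor produces super-polynomial decay in $t$, while the polynomial prefactor integrates to at most $O(t^{1/2})$ over $[0,t^{2-\epsilon}]$ (split at $r=t$ and use $(1+r)(t^2+r^2)^{-5/4}\lesssim t^{-3/2}$ for $r\le t$ and $\lesssim r^{-3/2}$ for $r>t$). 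This contribution is therefore negligible compared to the claimed rate $O(t^{-\epsilon/2})$.

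For the large-radii tail the exponential factor is at most $1$, and since $r>t^{2+\epsilon}\gg t$ one has $(1+r)(t^2+r^2)^{-5/4}\asymp r^{-3/2}$; consequently,
\begin{equation*}
  \int_{t^{2+\epsilon}}^{\infty} t\,r^{-3/2}\,\diff r \,=\, 2\,t\cdot t^{-(2+\epsilon)/2} \,=\, 2\,t^{-\epsilon/2},
\end{equation*}
which is exactly the stated rate.

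The point worth highlighting is that $\exp\{-\rho t^2/(r+\sqrt{t^2+r^2})\}$ transitions from super-polynomial decay in $t$ (when $r\ll t^2$) to essentially trivial behavior (when $r\gg t^2$), so that the Poisson kernel concentrates at the scale $r\asymp t^2$. This explains why the critical window is centered at $t^2$, and why the slowest-decaying contribution to the tail comes from the large-radii side, producing the overall rate $O(t^{-\epsilon/2})$.
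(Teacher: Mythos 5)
Your proposal is correct and follows essentially the same route as the paper: reduce via the Cartan decomposition and the bound $\delta(r)\lesssim e^{2\rho r}$ to the one-dimensional integral of $t\,(1+r)\,(t^2+r^2)^{-5/4}e^{-\rho(\sqrt{t^2+r^2}-r)}$, kill the inner tail $r\le t^{2-\epsilon}$ by the superpolynomial decay of the exponential factor $\exp\{-\rho t^2/(\sqrt{t^2+r^2}+r)\}$, and get the rate $t^{-\epsilon/2}$ from $\int_{t^{2+\epsilon}}^{\infty}t\,r^{-3/2}\,\diff r$ on the outer tail. Your treatment of the inner tail (splitting the cases $\epsilon\le 1$ and $\epsilon>1$ to get the uniform lower bound $t^{\min(\epsilon,1)}$ on the exponent) is in fact slightly more careful than the paper's, which invokes $\sqrt{t^2+r^2}\le 2r$ there, but the conclusion is the same.
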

	\begin{proof}
		Using the bounds \eqref{bounds}, the radiality of the Poisson kernel and the integration formula \eqref{vol} along with \eqref{dens}, we have, for $b>a\geq 0$,
		\begin{align*}
			\int_{a\leq d(x,o)\leq b} p_t(x)\,\diff{\mu}(x)\lesssim\int_{a\leq r\leq b} t\, (1+r)\, (t^2+r^2)^{-\frac{5}{4}}\,e^{-\rho(\sqrt{t^2+r^2}-r)}\,dr.
		\end{align*}
		On the one hand, we compute 
		$$\int_{ d(x,o)< t^{2-\epsilon}}p_t(x)\,\diff{\mu}(x)\leq C(N, \epsilon)\, t^{-N}\quad \forall N>0,$$ 
		due to the fact that in this case, for $t$ large enough we have $\sqrt{t^2+r^2}\leq 2r$, so $$\exp\left(-\rho(\sqrt{t^2+r^2}-r)\right)=\exp\left(-\rho\frac{t^2}{\sqrt{t^2+r^2}+r}\right)\leq \exp\left(-\frac{\rho}{3} t^{\epsilon}\right).$$
		On the other hand,  we compute
		\begin{align*}
			\int_{ d(x,o)> t^{2+\epsilon}}p_t(x)\,\diff\mu(x)\lesssim \int_{ r\geq t^{2+\epsilon}} t\, (1+r)\, (t^2+r^2)^{-\frac{5}{4}}\,\diff{r}
			\lesssim\int_{ r\geq t^{2+\epsilon}} t\, r^{-\frac{3}{2}}\,\diff{r} \lesssim t^{-\frac{\epsilon}{2}},
		\end{align*}
		which completes the proof.
	\end{proof}
	We next give a lemma related to the Busemann function on $\mathbb{X}$. 
	
	\begin{lemma}\label{Busemann} Let $y=y_0K$ be in a bounded region of $\mathbb{X}$. Then, for every $x=gK$ in the critical region $\Omega_t$,
		$$d(x,o)-d(x,y)
		=\tau(k^{-1}y_0)
		+\textnormal{O}\big(t^{-2+\epsilon}\big), $$
		Here,  $k$ is the left component of $g$ in the Cartan decomposition and $\exp(\tau(k^{-1}y_0)H_0)$ is the middle component of $k^{-1}y_0$ in the Iwasawa decomposition. 
	\end{lemma}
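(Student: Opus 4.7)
My strategy is to compute $d(x,y)$ explicitly in terms of Iwasawa data, first reducing to a distance from the origin via the Cartan decomposition of $g$ and then extracting what is, morally, a Busemann-function limit from a perturbed Iwasawa expression. Writing $g=k\exp(rH_0)k_2$ in the Cartan decomposition with $r=d(x,o)\in[t^{2-\epsilon},t^{2+\epsilon}]$, the $K$-invariance of the Riemannian distance gives
\begin{equation*}
    d(x,y)\,=\,d\bigl(\exp(rH_0)K,\,k^{-1}y_0 K\bigr)\,=\,d\bigl(o,\,\exp(-rH_0)\,k^{-1}y_0\cdot K\bigr).
\end{equation*}
Next, I would apply the Iwasawa decomposition $k^{-1}y_0=n\exp(\tau H_0)k'$ with $\tau=\tau(k^{-1}y_0)$ and $n\in N$; since conjugation by $\exp(-rH_0)$ preserves $N$, setting $n_r:=\exp(-rH_0)\,n\,\exp(rH_0)$ yields
\begin{equation*}
    d(x,y)\,=\,d\bigl(o,\,n_r\exp((\tau-r)H_0)\,K\bigr).
\end{equation*}
If $n=\exp(X+Y)$ with $X\in\mathfrak{g}_{\alpha}$ and $Y\in\mathfrak{g}_{2\alpha}$, then $n_r=\exp(e^{-r}X+e^{-2r}Y)\to e$ exponentially fast in $r$, and $\|X\|,\|Y\|$ are controlled uniformly as $y$ varies in the fixed bounded region.

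The heart of the matter is then a quantitative Busemann-type estimate
\begin{equation*}
    d\bigl(o,\,n_r\exp(sH_0)K\bigr)\,=\,-s\,+\,O(e^{-2r})\qquad\text{for } s=\tau-r\ll 0,
\end{equation*}
with implied constants depending only on the bounded region containing $y$. In the real hyperbolic case this drops out of the closed form $\cosh d(o,n_\xi\exp(sH_0)K)=\cosh s+\tfrac12 e^{-s}|\xi|^2$ applied to $n_r$, whose horospherical parameter has size $O(e^{-r})$: inverting $\cosh$ gives exactly the stated remainder. In the complex, quaternionic, and octonionic cases the analogous explicit Iwasawa--Cartan relation in rank one produces the same conclusion. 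I expect this to be the main obstacle: the estimate is elementary in $\mathbb{H}^n$ thanks to the closed-form distance, but in the other rank one cases one must invoke the explicit Iwasawa--Cartan correspondence on $\mathbb{X}$, or, alternatively, a perturbation bound for $n\mapsto d(o,n\exp(sH_0)K)$ that is uniform as $|s|\to\infty$.

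Combining the previous steps,
\begin{equation*}
    d(x,o)-d(x,y)\,=\,r-\bigl((r-\tau)+O(e^{-2r})\bigr)\,=\,\tau(k^{-1}y_0)+O(e^{-2r}),
\end{equation*}
and since $x\in\Omega_t$ forces $r\geq t^{2-\epsilon}$, the error is in fact exponentially small in $t$ and therefore comfortably absorbed into the claimed $O(t^{-2+\epsilon})$; the polynomial rate $t^{-2+\epsilon}$ in the statement is imposed only by the lower bound $r\geq t^{2-\epsilon}$ defining the critical region $\Omega_t$.
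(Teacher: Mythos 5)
Your reduction is essentially the one the paper uses: Cartan decomposition of $g$, Iwasawa decomposition of $k^{-1}y_0$, and the observation that conjugation by $\exp(-rH_0)$ contracts the nilpotent part at rate $e^{-r}$ (resp.\ $e^{-2r}$ on $\mathfrak{g}_{2\alpha}$), after which the main term $\tau(k^{-1}y_0)$ falls out of $r-|\tau-r|$. Your side remark is also correct: the genuine error is exponentially small in $r$, and the stated $O(t^{-2+\epsilon})$ is only driven by $r\ge t^{2-\epsilon}$.

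The one genuine gap is the step you yourself flag: the uniform estimate $d(o,n_r\exp(sH_0)K)=-s+O(e^{-2r})$ is proved only for real hyperbolic space via the closed-form distance, while for the complex, quaternionic and octonionic cases you merely assert that the analogous Iwasawa--Cartan relation gives the same conclusion. Note also that a naive triangle inequality applied to your expression does not work, because $d\bigl(n_r\exp(sH_0)K,\exp(sH_0)K\bigr)=d\bigl(\exp(-sH_0)n_r\exp(sH_0)K,o\bigr)$ and conjugation by $\exp(-sH_0)$ with $-s=r-\tau$ re-expands $n_r$ to $\exp(e^{-\tau}X+e^{-2\tau}Y)$, which is bounded but not small; so uniformity in $s$ is exactly what is at stake in your ``alternative (b)''. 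The gap closes in one line, and this is in essence what the paper does (it quotes \cite[Lemma 3.8]{APZ2023}): translate on the left by $n_r^{-1}$, so that $d(o,n_r\exp(sH_0)K)=d(n_r^{-1}K,\exp(sH_0)K)$, and then
\begin{equation*}
\bigl|\,d(n_r^{-1}K,\exp(sH_0)K)-|s|\,\bigr|\;\le\;d(n_r^{-1}K,o)\;\lesssim\;e^{-r},
\end{equation*}
uniformly for $y$ in the bounded region, since $n_r^{-1}=\exp(-e^{-r}X-e^{-2r}Y)$ and $\|X\|,\|Y\|$ are bounded there. Losing $e^{-2r}$ to $e^{-r}$ is immaterial for the claimed $O(t^{-2+\epsilon})$. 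Alternatively, your route can be completed in all rank one cases by the explicit rank-one (Damek--Ricci type) formula expressing $\cosh^2(d/2)$ in terms of $|X|$, $|Y|$ and $s$, but that is heavier machinery than the left-translation trick requires.
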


	\begin{proof} The arguments follow closely those of  \cite[Lemma 3.8]{APZ2023}, but we include the proof for the sake of completeness. Write $x=gK$, where $g=k\,(\exp rH_0)\,k'$ in the Cartan decomposition. Then $d(gK, o)=r$. Consider the Iwasawa decomposition
		$k^{-1}y_0=n(k^{-1}y_0)\,(\exp{\tau(k^{-1}y_0)\, H_0})\,k''$ for some $k''\in{K}$. Then,
		\begin{align*}\label{S3 distance decomposition in lemma}
			d(x,y)=d(gK,y_0K)\,
			&=\,d\big(k\,(\exp r H_0)\,K,\;k\,n(k^{-1}y_0)\,(\exp{\tau(k^{-1}y_0)H_0})\,K\big)
			\notag\\[5pt]
			&=\,d\big(\exp(-rH_0)\,[n(k^{-1}y_0)]^{-1}(\exp{rH_0})\,K,\;
			\exp((\tau(k^{-1}y_0)-r)H_0)\,K\big),
		\end{align*}
		therefore we write
		\begin{align*}
			d(x,o)-d(x,y)=d(gK,o)-d(gK,y_0K)\,
			&=\,\overbrace{\vphantom{\Big|}
				d(gK,o)-d\big(\exp(\tau(k^{-1}y_0)-r)H_0)\,K,o\big)}^{I}\\
			&+\,\underbrace{\vphantom{\Big|}
				d\big(\exp((\tau(k^{-1}y_0)-r)H_0)\,K,o\big)-d(gK,y_0K)}_{II}.
		\end{align*}
		On the one hand, we have
		\begin{align*}
			I\,
			=\,r-|\tau(k^{-1}y_0)-r |\,
			&=\,\frac{2\,r  \,\tau(k^{-1}y_0)-\tau(k^{-1}y_0)^{2}}{
				r+|\tau(k^{-1}y_0)-r |}\\[5pt]
			&=\,\tau(k^{-1}y_0)
			+\textrm{O}\big(\tfrac{1}{r}\big)\\[5pt]
			&=\,\tau(k^{-1}y_0)
			+\textrm{O}\big({t^{-2+\epsilon}}\big)
		\end{align*}
		by using that $r\geq t^{2-\epsilon}$ and the well-known fact that $|\tau(k^{-1}y_0)|\leq d(k^{-1}y_0K, o)=d(y, o)$, thus bounded.
		On the other hand, the term $II$  tends exponentially fast to $0$, see for instance \cite[Lemma 3.8]{APZ2023}, thus we are done.
	\end{proof}
	
	The next lemma is crucial for our proof.
	
	\begin{lemma}\label{quotient} 
		Let $x=k(\exp rH_0)K \in \Omega_t$, $y=y_0K$ be bounded  and let $0<\epsilon<1/2$. Then, 
		$$\frac{p_t(x,y)}{p_t(x,o)}=e^{2\rho\, \tau(k^{-1}y_0)}+\textrm{O}(t^{-2+4\epsilon}).$$
	\end{lemma}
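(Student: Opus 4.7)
The plan is to exploit the sharp asymptotic \eqref{asymp} for the Poisson kernel together with the Busemann-type expansion of Lemma \ref{Busemann}. Write $r:=d(x,o)$, so that $r\in[t^{2-\epsilon},t^{2+\epsilon}]$, and $r':=d(x,y)$, and set $\tau:=\tau(k^{-1}y_0)$, which is bounded since $|\tau(k^{-1}y_0)|\le d(y,o)$ and $y$ lies in a bounded region. Lemma \ref{Busemann} then gives $r-r'=\tau+\mathrm{O}(t^{-2+\epsilon})$, and in particular $r\asymp r'$ for $t$ large.

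I would then plug \eqref{asymp} into the ratio $p_t(r')/p_t(r)$ and analyse the three resulting factors separately: (i) the exponential factor $\exp\{-\rho(r'-r)-\rho(\sqrt{t^2+r'^2}-\sqrt{t^2+r^2})\}$; (ii) the algebraic factor $(r'/r)\bigl((t^2+r^2)/(t^2+r'^2)\bigr)^{5/4}$; and (iii) the ratio of the two values of $\gamma$ at the arguments $\rho r/\sqrt{t^2+r^2}$ and $\rho r'/\sqrt{t^2+r'^2}$.

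For (i) I would use the Taylor expansion $\sqrt{t^2+s^2}=s+t^2/(2s)+\mathrm{O}(t^4/s^3)$ valid for $s\ge t^{2-\epsilon}$, so that a direct cancellation yields $\sqrt{t^2+r^2}-\sqrt{t^2+r'^2}=(r-r')+\mathrm{O}(t^2/r^2)$; since $t^2/r^2\lesssim t^{-2+2\epsilon}$ in $\Omega_t$, the exponent collapses to $2\rho\tau+\mathrm{O}(t^{-2+2\epsilon})$, giving factor (i) equal to $e^{2\rho\tau}(1+\mathrm{O}(t^{-2+2\epsilon}))$. Factor (ii) contributes only $1+\mathrm{O}(1/r)=1+\mathrm{O}(t^{-2+\epsilon})$, because $|r-r'|$ and $|r^2-r'^2|/(t^2+r^2)$ are both controlled by $1/r$ up to a bounded multiplicative constant. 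Factor (iii) is handled by noting that $\rho r/\sqrt{t^2+r^2}=\rho+\mathrm{O}(t^{-2+2\epsilon})$ and similarly for $r'$, while $\gamma$ is a smooth (in fact analytic) function near $s=\rho$ since its Gamma arguments stay away from the non-positive integers; thus (iii) is $1+\mathrm{O}(t^{-2+2\epsilon})$. Multiplying and subtracting $e^{2\rho\tau}$, bounded because $\tau$ is bounded, yields an error of order $t^{-2+2\epsilon}$, which is comfortably within the stated $\mathrm{O}(t^{-2+4\epsilon})$.

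The main obstacle is the \emph{uniformity} of the asymptotic \eqref{asymp} throughout the critical region $\Omega_t$: one needs the relative error in \eqref{asymp} to be quantitatively small on this range of $r$, not merely pointwise $o(1)$. I would address this by combining \eqref{asymp} with the two-sided bounds \eqref{bounds} to ensure that the ``$\sim$'' in \eqref{asymp} is uniform in $r$ for $t^{2-\epsilon}\le r\le t^{2+\epsilon}$, with a remainder term that can be absorbed into $\mathrm{O}(t^{-2+4\epsilon})$; this is where the extra slack between the $2\epsilon$ I obtain from Taylor and the $4\epsilon$ tolerated by the statement becomes useful.
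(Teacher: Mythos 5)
Your proposal is correct and follows essentially the same route as the paper: both start from the asymptotic \eqref{asymp}, split the ratio into the $\gamma$-factor, the algebraic factor and the exponential factor, and feed Lemma \ref{Busemann} into the exponent, with your Taylor expansions playing the role of the paper's mean-value/algebraic manipulations and yielding errors that likewise fit inside $\mathrm{O}(t^{-2+4\epsilon})$. One small caveat: the two-sided bounds \eqref{bounds} only give comparability up to constants and cannot by themselves upgrade \eqref{asymp} to a quantitative uniform expansion on $\Omega_t$; like the paper, you ultimately have to invoke the uniformity in $r$ of the Anker--Ji asymptotics rather than derive it from \eqref{bounds}.
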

	
	\begin{proof} Write $r=d(x,o)$, $s=d(x,y)$ and let $d(y,o)<\xi$, for some $\xi>0$. By the triangle inequality, we have $|r-s|\leq \xi$ and since $x$ is in the critical region, we have $t^{2-\epsilon}\leq r\leq t^{2+\epsilon}$. In addition, for $t$ large enough, we have
		$$\frac{1}{2}\,t^{2-\epsilon}\leq t^{2-\epsilon}-\xi\leq s=d(x,y)\leq t^{2+\epsilon}+\xi\leq 2\,t^{2+\epsilon},$$
		and $t^2+r^2\asymp r^2$, $t^2+s^2\asymp s^2$.
		
		By \eqref{asymp}, we get
		\begin{align*} \frac{p_t(d(x,y))}{p_t(d(x,0))}\sim   \frac{\gamma\left(\rho\frac{s}{\sqrt{t^2+s^2}}\right)}{\gamma\left(\rho\frac{r}{\sqrt{t^2+r^2}}\right)}\, \frac{s}{r}\,\frac{(t^2+r^2)^{\frac{5}{4}}}{(t^2+s^2)^{\frac{5}{4}}} \exp\left\{\rho(r-s)\left(1+\frac{r+s}{\sqrt{t^2+r^2}+\sqrt{t^2+s^2}}\right)\right\}.
		\end{align*}
		We next give asymptotics for the terms of the quotient on the right hand side. 
		
		First, recall by \cite[p.1042]{AJ99} that the function $\gamma$ satisfies
		\begin{equation}\label{b func}
			\gamma(u) \asymp 1, \quad \frac{d}{du}\gamma(u)=O(1),
		\end{equation}
		if $u>0$ is bounded above, and below away from zero.	Therefore, by the mean value theorem and \eqref{b func}, we have, for some $r_0$ between $r, s>0$,
		\begin{align*} 
			\left|	\gamma\left(\rho\frac{r}{\sqrt{t^2+r^2}}\right)-\gamma\left(\rho\frac{s}{\sqrt{t^2+s^2}}\right) \right| &\lesssim |r-s|  \left| \gamma'\left(\rho\frac{r_0}{\sqrt{t^2+r_0^2}}\right)\right|\frac{t^2}{(t^2+r_0^2)^{3/2}}\\
			&\lesssim t^2/r_0^{3}.
		\end{align*} 
		Therefore, again by \eqref{b func} and by the fact that $r_0\gtrsim t^{2-\epsilon}$, we have
		$$ \gamma\left(\rho\frac{s}{\sqrt{t^2+s^2}}\right) /\gamma\left(\rho\frac{r}{\sqrt{t^2+r^2}}\right) =1+ O\left({t^{-4+3\epsilon}}\right).$$ 
		Next, by a similar mean value argument applied to $(t^2+(.)^2)^{\frac{5}{4}}$, we  have
		$$\frac{(t^2+r^2)^{\frac{5}{4}}}{(t^2+s^2)^{\frac{5}{4}}}=
		1+O\left({t^{-2+4\epsilon}}\right).$$ 
		Also, since $|r-s|\leq \xi$, we have $$\frac{s}{r}=1+O\left({t^{-2+\epsilon}}\right).$$ 
		
		It remains to deal with the exponential terms, which are the main ones. We first claim that 
		\begin{equation}\label{exp2}
			\frac{r+s}{\sqrt{t^2+r^2}+\sqrt{t^2+s^2}}=1+O\left({t^{-2+2\epsilon}}\right).
		\end{equation}
		Indeed, consider the function $f(u)=\sqrt{u^2+r^2}+\sqrt{u^2+s^2}$, $u \geq 0$, and observe that the left hand side of \eqref{exp2} is equal to $f(0)/f(t)$. Then, the mean value theorem for $f$ in $[0,t]$  together with the fact that  $$f'(u)\lesssim  \frac{u}{r}+\frac{u}{s}\lesssim t^{-1+\varepsilon}, \qquad f(u)\gtrsim  t^{2-\varepsilon}, \qquad \forall u\in[0,t],$$   yield the claimed asymptotics \eqref{exp2}.
		Finally, in Lemma \ref{Busemann} it was shown that
		$$r-s=d(x,o)-d(x,y)=d(gK, o)-d(gK, y_0K)
		=\tau(k^{-1}y_0)
		+\textnormal{O}\big(t^{-2+\epsilon}\big).$$
		Therefore, \begin{align*}
			\exp\left\{\rho(r-s)\left(1+\frac{r+s}{\sqrt{t^2+r^2}+\sqrt{t^2+s^2}}\right)\right\}&=e^{2\rho\, \tau(k^{-1}y_0)+\textrm{O}(t^{-2+2\epsilon})}=e^{2\rho\, \tau(k^{-1}y_0)}+\textrm{O}(t^{-2+2\epsilon}).
		\end{align*}
		
		Altogether, we have 
		$$\frac{p_t(x,y)}{p_t(x,o)}=e^{2\rho\, \tau(k^{-1}y_0)}+\textrm{O}(t^{-2+4\epsilon}).$$
	\end{proof}
	
	\begin{proof}[Proof of Theorem \ref{Main Poisson}] We consider the case of continuous compactly supported initial data. Next, we work separately outside and inside the critical region: we show first that $\|e^{-t\sqrt{-\Delta}}f\|_{L^1(\mathbb{X} \smallsetminus \Omega_t)}\rightarrow 0$  for all $f\in \mathcal{C}_c(\mathbb{X})$, without any further symmetry assumptions on $f$. However, the convergence to the Poisson kernel inside $\Omega_t$, unless $f$ is radial, may break down. Finally, let us point out that having proven the desired convergence in the $L^1$ norm for radial $\mathcal{C}_c(\mathbb{X})$ functions, one may conclude for the whole class of radial $L^1(\mathbb{X})$ initial data by a density argument, see \cite{APZ2023}.

		To begin with, let $x\notin \Omega_t$. Let also $\xi>0$ be a constant such that the compact support of $f$ is contained in $B(o, \xi)$. Then  we have
		\begin{align*}
			\int_{\mathbb{X}\,\smallsetminus\,\Omega_{t}}\,|e^{-t\sqrt{-\Delta}}f(x)|\,\diff{\mu}(x)\leq 
			\int_{B(o,\xi)}\,|f(y)|
			\int_{\mathbb{X}\,\smallsetminus\,\Omega_{t}}\,p_{t}(d(x,y))\,\diff{\mu}(x)\,\diff{\mu}(y).
		\end{align*}
		Notice that $x\in \mathbb{X}\smallsetminus \Omega _{t}$ and $y\in {%
			B(o,\xi)}$ imply $x\in \mathbb{X}\smallsetminus \widetilde{\Omega }_{t,y}$, where 
		\begin{equation*}
			\widetilde{\Omega }_{t,y}\,=\,\left\{ {x\in \mathbb{X}\,\big|\,2\,t^{2-\epsilon}\,\leq \,d(x,y)\,\leq \,\frac{1}{2}\,t^{2+\epsilon}}%
			\right\}
		\end{equation*}%
		provided $t$ is large enough. Indeed, for $t$ large enough, when $d(x,o)\geq t^{2+\epsilon}$ we have $d(x,y)\geq d(x,o)-d(y,o)>t^{2+\epsilon}-\xi>\frac{1}{2}\,t^{2+\epsilon}$, while when $d(x,o)\leq t^{2-\epsilon}$ we have $d(x,y)\leq d(x,o)+d(y,o)<t^{2-\epsilon}+\xi<2\,t^{2-\epsilon}$. Therefore, we have
		\begin{align*}
			\int_{\mathbb{X}\,\smallsetminus\,\Omega_{t}}\,|e^{-t\sqrt{-\Delta}}f(x)|\,\diff{\mu}(x)&\leq 
			\int_{B(o,\xi)}\,|f(y)|
			\int_{\mathbb{X}\,\smallsetminus\,\widetilde{\Omega }_{t,y}}\,p_{t}(d(x,y))\,\diff{\mu}(x)\,\diff{\mu}(y) \\
			&\lesssim t^{-\frac{\epsilon}{2}}\,\|f\|_{L^1(\mathbb{X})}\lesssim t^{-\frac{\epsilon}{2}},
		\end{align*} 
		working as in Proposition \ref{Poisson critical} for $\int_{\mathbb{X}\,\smallsetminus\,\widetilde{\Omega }_{t,y}}\,p_{t}(d(x,y))\,\diff{\mu}(x)$. Thus,
		\begin{align*}	\int_{\mathbb{X}\,\smallsetminus\,\Omega_{t}}\,|e^{-t\sqrt{-\Delta}}f(x)-M\, p_t(x)|\,\diff{\mu}(x)&\leq 	\int_{\mathbb{X}\,\smallsetminus\,\Omega_{t}}\,|e^{-t\sqrt{-\Delta}}f(x)|\,\diff{\mu}(x)+M\,	\int_{\mathbb{X}\,\smallsetminus\,\Omega_{t}}\,p_t(x)\,\diff{\mu}(x)\\
			&\lesssim t^{-\frac{\epsilon}{2}}.
		\end{align*}
		This proves the desired convergence outside the critical region for all $f\in \mathcal{C}_c(\mathbb{X})$.
		
		We now turn to $x\in \Omega_t$. By Lemma \ref{quotient}, the right-$K$-invariance of $\tau(k^{-1}.)$ and $f$, and the definition \eqref{H-Ftr} of the Helgason-Fourier transform that
		\begin{align}
			e^{-t\sqrt{-\Delta}}f(x)-M\,p_t(x)&= \int_{\mathbb{X}}\,(p_t(x,y)-p_t(x,o))\,f(y)\,\diff{\mu(y)} \notag \\
			&=p_t(x,o)\int_{\mathbb{X}}\,\left(\frac{p_t(x,y)}{p_t(x,o)}-1\right)\,f(y)\,\diff{\mu(y)} \notag \\
			&=p_t(x,o)\left\{\int_{\mathbb{G}}\left(e^{2\rho\, \tau(k^{-1}y_0)}-1+\textrm{O}(t^{-2+4\epsilon})\right)\,f(y_0)\,\diff{y_0}\right\} \notag \\
			&=
			p_t(x,o)\,
			\big(
			\widehat{f}(i\rho,k\mathbb{M})\,
			-\,\widehat{f}(-i\rho,k\mathbb{M})\,
			+\,\textrm{O}\big(t^{-2+4\epsilon}\,\|f\|_{L^1(\mathbb{X})}\big)
			\big).
			\label{S3 new rmk}
		\end{align}
		
		Notice that $\widehat{f}(\pm\,i\rho,k\mathbb{M})=\mathcal{H}f(\pm\,i\rho)=M$ when $f$ is radial, see Subsection \ref{subsection SS}. Therefore in this case, we deduce the desired convergence by integrating \eqref{S3 new rmk} over the critical region. 
		
		On the other hand, using the Cartan decomposition \eqref{vol} we have
		\begin{align*}
			\int_{\Omega_t}\,
			|e^{-t\sqrt{-\Delta}}f(x)\,-\,M\,p_{t}(x)|\, \diff{\mu(x)}\,
			&\longrightarrow\,
			\int_{K}
			\Big|
			\int_{\mathbb{G}}f(y_0)\,
			\big(
			e^{2\rho\, \tau(k^{-1}y_0)}\, \,-\,1
			\big)\, \diff{y_0}
			\Big|\,\diff{k}
		\end{align*}
		as $t\rightarrow+\infty$. The last integral is not constantly zero when $f$ is not radial. For example, consider $f$ to be a Dirac measure supported on some point $y=y_0K$ other than the origin, thus for $y_0\notin K$. In other words,  the solution now coincides with $p_t(\,.\,,y)$ and the mass is equal to $1$. In this case, however, the last integral is equal to $\int_{K}\Big|	e^{2\rho \, \tau(k^{-1}y_0)}\, \,-\,1
		\,\Big|\, \diff{k}$, thus does not vanish identically.
	\end{proof}	
	
	\textbf{Acknowledgments.} The author would like to thank the referee for useful comments which improved the presentation, and for pointing out the references \cite{CK03} and \cite{Shi17}. This work was supported by the Hellenic Foundation for Research and Innovation, Project HFRI-FM17-1733. Currently the author is funded by the Deutsche Forschungsgemeinschaft (DFG, German Research Foundation)--SFB-Gesch{\"a}ftszeichen --Projektnummer SFB-TRR 358/1 2023 --491392403.

\end{document}